\DeclareSymbolFont{AMSb}{U}{msb}{m}{n}
\DeclareSymbolFontAlphabet{\mathbb}{AMSb} 
\newcommand{\E}{E}
\newcommand{\F}{F}
\newtheorem{theorem}{Theorem}[section]
\newtheorem{proposition}[theorem]{Proposition}
\newtheorem{lemma}[theorem]{Lemma}
\newtheorem{corollary}[theorem]{Corollary}
\newtheorem{notation}[theorem]{Notation}
\newtheorem{definition}[theorem]{Definition}
\newtheorem{remark}[theorem]{Remark}
\newtheorem{example}[theorem]{Example}
\author{
Peter Pfaffelhuber and Heinz Weisshaupt\\
\\ZBSA University of Freiburg\\
Habsburgerstrasse 49\\
79104 Freiburg, Germany
\\
heinz.weisshaupt@zbsa.uni-freiburg.de
\\
peter.pfaffelhuber@stochastik.uni-freiburg.de}
\title{Sensitivity analysis of one parameter semigroups exemplified by the Wright--Fisher diffusion}
\begin{document}

\maketitle

\begin{abstract}
We consider the sensitivity, with respect to a parameter $\theta$, of parametric families of operators $A_{\theta}$, vectors $\pi_{\theta}$ corresponding to the adjoints $A_{\theta}^{*}$ of $A_{\theta}$ via $A_{\theta}^{*}\pi_{\theta}=0$ and one parameter semigroups $t\mapsto e^{tA_{\theta}}$. We display formulas relating weak differentiability of $\theta\mapsto \pi_{\theta}$ (at $\theta=0$) to weak differentiability of $\theta\mapsto A_{\theta}^{*}\pi_{0}$ and $[e^{A_{\theta}t}]^{*}\pi_{0}$. We give two applications: The first one concerns the sensitivity of the Ornstein--Uhlenbeck 
process with respect to its location parameter. The second one provides new insights regarding the Wright--Fisher diffusion for small mutation parameter.
  
\end{abstract}

{\bf MSC 2001: 47A55, 35B20, 47D06, 47D07, 46N60, 46A20}\\

{\bf Keywords:} one parameter semigroups, sensitivity analysis, diffusions, Wright--Fisher diffusion, Ornstein--Uhlenbeck semigroup.\\

\section*{Introduction}

Sensitivities of parametric families of dynamical systems (with respect to the parameter) have been studied in the context of stochastic processes \cite{Pflug96} as well as partial differential equations \cite{Bues} and 
are useful tools in optimization and control. We consider sensitivities in the setting of one parameter semigroups. This setting constitutes a unifying approach to continuous time Markov processes \cite{Bob} and linear PDEs \cite{EngelNagel}, \cite{Bob}. In particular it allows an elegant treatment of the sensitivities of the Wright--Fisher diffusion.

We consider one parameter semigroups\footnote{Mappings $U$ from $[0,\infty)$ to the space of linear operators $U(t):E\to E$ on some linear space $E$ such that $U(t+s)=U(t)U(s)$} $U:=(U(t))_{t\in [0,\infty)}$, that can be represented as $U(t)=e^{tA}$ for some linear operator $A:E\to E$ on some linear space $E$. This does not exclude generators $A$ that are usually considered as 'unbounded', since we do in general not suppose that $E$ is a Banach space. (For information on the  Hille-Yosida Generation Theorem providing $U(t)=e^{tA}$ for unbounded $A$ in a Banach space setting consult \cite[Chapter II, Section 3]{EngelNagel}.)   

Since the topic of this article is not the behaviour of a single semigruop, but the sensitivity of their behaviour with respect to small perturbations of a parameter $\theta$, we do not only consider one generator $A$ or one semigroup $U$, but consider parametric families $(A_{\theta})_{\theta\in\Theta}$ and $(U_{\theta})_{\theta\in\Theta}$ of generators $A_{\theta}$ and semigroups $U_{\theta}=(e^{tA_{\theta}})_{t\in [0,\infty)}$, with $0\in\Theta\subseteq {\mathbb R}$. (Note that $\theta$ is an additional parameter and must not be confused with the parameter $t$ of a single one parameter semigroup.)

We further consider a second linear space $F$ that is in duality with $E$ and parametric families $(\pi_{\theta})_{\theta\in\Theta}$ such that $\pi_{\theta}\in F$ and $A_{\theta}^{*}\pi_{\theta}=0$ for an adjoint (dual) $A_{\theta}^{*}:F\to F$ of $A_{\theta}$.

We first show in Lemma \ref{A-fundamental-diff-relation-lemma} that differentiability of $\theta\mapsto A_{0}^{*}\pi_{\theta}$ at $\theta=0$ is equivalent to differentiability of $\theta\mapsto A_{\theta}^{*}\pi_{0}$ at $\theta=0$ provided that 
\begin{equation*}
[A_{0}^{*}-A_{\theta}^{*}][\pi_{\theta}-\pi_{0}]\to 0\ \hbox{ for }\ \theta\to 0,
\end{equation*}
with all limits taken in the weak sense with respect to the given duality. Further 
\begin{equation}\label{equality-of-partial}
\frac{\partial A_{0}^{*}\pi_{\theta}}{d\theta}|_{\theta=0}=-\frac{\partial A_{\theta}^{*}\pi_{0}}{d\theta}|_{\theta=0},
\end{equation}
with the involved limits again taken in the weak sense.

Thus Lemma \ref{A-fundamental-diff-relation-lemma} makes the calculation of $\frac{\partial A_{0}^{*}\pi_{\theta}}{d\theta}|_{\theta=0}$ in some cases easier than the calculation of $\frac{\partial \pi_{\theta}}{\partial \theta}|_{\theta=0}$. This fact is exemplified by Remark \ref{d_pi_theta-FW}, Proposition \ref{A'_0-pi_0-FW} and Remark \ref{|nu>=partial} in the case that $A_{\theta}$ is the generator of the Wright--Fisher diffusions.

Next we prove Theorem \ref{main-result}, the main result of the article. In the case that $F$ is a subspace of the algebraic dual $E'$ of $E$, the theorem provides a formula (see Remark \ref{main-cor}) for $\frac{\partial [U_{\theta}(t)]^{*}}{\partial \theta}\pi_{0}|_{\theta=0}$ with $[U_{\theta}(t)]^{*}$ denoting the uniquely determined adjoint of $U_{\theta}(t)$, i.e., a formula for the sensitivity of $t\mapsto [U_{\theta}(t)]^{*}\pi_{0}$ with respect to the parameter $\theta$. This formula involves $\nu:=\frac{\partial A_{\theta}^{*}\pi_{0}}{d\theta}|_{\theta=0}$ and operators $V_{0}(t)$ given by the series expansion $V_{0}(t)=\sum_{i=1}^{\infty} \frac{t^{n}}{n!}A^{n-1}$.

It is a simple fact that in the case that $F\subseteq E'$ we have $[U_{\theta}(t)]^{*}\pi_{\theta}=\pi_{\theta}$, i.e., $\pi_{\theta}$ is a stationary vector of $[U_{\theta}(t)]^{*}$.

One of the hypotheses of Theorem \ref{main-result} is that $E=\bigcup_{j\in J} E_{j}$ with $E_{j}$ Banach spaces with respect to the norms $\Vert . \Vert_{j}$, such that $A_{\theta}(E_{j})\subseteq E_{j}$ for all $\theta\in\Theta$ and the restriction $A_{\theta}|_{E_{j}}$ of $A_{\theta}$ to $E_{j}$ is bounded with respect to $\Vert. \Vert_{j}$. It is essential for the proof of Theorem \ref{main-result} that the $E_{j}$ do not depend on $\theta$. Note that this hypothesis is fairly restrictive. It prevents us, for example, from the investigation of diffusion equations with arbitrary coefficient functions. It allows however quite interesting insights in the following situation:

Our abstract results are applicable to the case that $E$ equals the space of polynomials on some appropriate real interval and the generators $A_{\theta}$ are of the form
\begin{equation}\label{diffop-A}
A_{\theta}:=\sum_{i=1}^{n} p_{i}(x)\cdot q_{i}(\theta)\frac{\partial^{i}}{{\partial x}^{i}}
\end{equation}
with $p_{i}$ polynomials of degree less than $i$ and $q_{i}$ differentiable functions. This is due to the fact that the operators $A_{\theta}$ leave for any $k\in {\mathbb N}$ the spaces of polynomials of degree less than $k$ invariant. We apply our results to two examples of diffusions operators $A_{\theta}$ that fulfill (\ref{diffop-A}), i.e. to differential operators of the form (\ref{diffop-A}) with $n=2$.

In the first example we demonstrate the applicability of our results to  a parametric family of Ornstein--Uhlenbeck semigroups $t\mapsto [U_{\theta}(t)]^{*}$ corresponding to generators
\begin{equation*}
A_{\theta}:=(\theta-x)\frac{\partial }{\partial x}+\frac{\sigma^2}{2}\frac{\partial^{2} }{{\partial x}^{2}},
\end{equation*}
considering without loss of generality the case $\sigma^{2}=1$.

OU-semigroups and stochastic processes corresponding to these semigroups are frequently used in interest rate modeling. The parameter $\theta$ is interpreted as the interest rate to which the process reverts. (Compare with \cite[Vol 2, Chapter 46]{Wilmott} and \cite[ Section 9.3]{Steele}.) In this example all derivatives can be represented by functions. Further it is possible to calculate the derivatives directly since the evolution of the OU-semigroup is explicitly given by (\ref{evolution-of-OU}). Thus the example of the OU-semigroup is just of an illustrative nature that does not really rely on the developed theory. This is quite different for our second example:

In our second example we consider a parametric family of Wright--Fisher diffusions with mutation and without selection, that can be described by the semigroups $[U_{\theta}(t)]^{*}$ corresponding to the generators
\begin{equation}\label{generator-wright-fisher-diffusion-intro}
A_{\theta}:=(1-x)\theta\frac{\partial }{\partial x} + x \kappa \frac{\partial}{\partial x}+x(1-x)\frac{1}{2}\frac{\partial^2}{\partial x^2}.
\end{equation}
Wright--Fisher diffusions are useful tools in population genetics, describing the distributions of allele-frequencies in a population (see \cite{Ewens} and Remark \ref{interpretation-of-WF}). Note that the stationary distribution $\pi_{0}$ of $[U_{0}(t)]^{*}$ is in the degenerate case $\theta=0$ and $\mu>0$ given by the Dirac measure at $0$. We calculate the sensitivity $\frac{d\pi_{\theta}}{d\theta}|_{\theta=0}$ of $\pi_{\theta}$ at $\theta=0$ as well as the sensitivities $A_{0}^{*}\frac{d\pi_{\theta}}{d\theta}$ and $\frac{\partial U_{\theta}^{*}\pi_{0}}{d\theta}|_{\theta=0}$. We obtain (Proposition \ref{|nu>=partial}, formula (\ref{=partial})) that $A^{*}\frac{d\pi_{\theta}}{d\theta}|_{\theta=0} = \frac{\partial}{\partial x}$, i.e., the sensitivities under consideration are in general not representable by functions or measures on $[0,1]$, but are general linear functionals on the space of polynomials on $[0,1]$.

What makes the concrete calculation of these sensitivities difficult is the fact that the involved operators are not diagonalizable. It is however possible to construct a basis (see Remark \ref{basis-remark}) of the space of polynomials such that $A_{0}$ is almost diagonalizable in the sense that equation (\ref{A^ĸ_psi_n}) holds. This enables us to provide a relatively simple recursive formula for the sensitivity $\frac{\partial U_{\theta}}{\partial \theta} \pi_{0}|_{\theta=0}$ (Theorem \ref{wf-rec-theorem}) with respect to this basis.

Although our methods are purely functional analytic and non-probabilistic in nature we can---in the case of diffusion processes---interpret the action of the sensitivities on the space of polynomials in a probabilistic manner: The action of the derivative on the $n$-th monomial is simply the derivative of the $n$-th moment of the parametric family of probability measures under consideration. This can be further interpreted in the case of the Wright--Fisher diffusion (compare with Remark \ref{interpretation-of-WF}).

Derivatives of diffusion semigruops with respect to an additional parameter $\theta$ have been dealt with in the context of mathematical finance mainly in the context of the stochastic calculus of variations, but also in a PDE context. For an introduction to such results consult \cite{Mall-Thal} (especially \cite[ Theorems 2.2 and 2.3]{Mall-Thal}) and \cite{Wilmott}. (For an elementary approach to the relationship of diffusion processes and diffusion equations consult \cite{WeLind}). Derivatives of Markov kernels have been considered in \cite{Pflug96} and \cite{HeiHoWe}. An extension to derivatives of general operators in a Banach space context, relating the derivatives of the operators to the derivatives of their stationary vectors has been given in \cite{WeStationarySensitivity}.      

\section{Generators and stationary vectors}

\begin{remark}
We follow in the style of presentation of our general functional analytic results \cite[Sections 16 and 21]{KeNa}.
\end{remark}

\begin{definition}
We say that $(\E, \F, \langle . | .\rangle)$ is a dual pairing of the linear spaces $\E$ and $\F$ if $\langle . | . \rangle :\E\times \F \to \mathbb R$ is bilinear. We denote by $w(\E,\F)$ and $w(\F,\E)$ the weak topologies induced by the families of mappings $\{ \xi\mapsto \langle \xi | \mu\rangle \mid {\mu\in\F} \}$ and $\{ \mu \mapsto \langle \xi | \mu\rangle \mid {\xi\in\E} \}$, respectively. We say that the dual pairing is separating if $w(\E,\F)$ and $w(\F, \E)$ are Hausdorff. In the case of a separating dual pairing we may identify $\F$ with a subspace of the algebraic dual $E'$ and vice versa $\E$ with a subspace of $\F'$. We denote the spaces of $w(\E,\F)$-continuous linear transformations $A:\E\to\E$ by ${\cal L}_{w}(\E)$ and the space of $w(\F,\E)$-continuous linear transformation $A:\F\to\F$ by ${\cal L}_{w}(\F)$, respectively. We say that the linear transformations $S:\E\to \E$ and $T:\F\to \F$ are dual if for arbitrary $\xi\in\E$ and $\mu\in\F$ we have $\langle S\xi | \mu \rangle = \langle \xi | T\mu\rangle$. In the case that the pairing is separating, we call a dual transformations an adjoint and note that the adjoint is uniquely determined. 
\end{definition}

\begin{remark}\label{partial-notation}
Let $(\E, \F, \langle . | .\rangle)$ be a dual pairing. Let $E'$ denote the algebraic dual of $E$, i.e, the space of all linear functionals (continuous or not) on $E$. Given a parametric family $(\mu_{\theta})_{\theta\in\Theta}\in F^{\Theta}$ such that 
\begin{equation}
\forall \xi\in E\ \ \ \lim_{\theta\to 0} \theta^ {-1}\langle \xi | \mu_{\theta} - \mu_{0}\rangle\ \ \hbox{ exists,}
\end{equation}
we let $\frac{\partial \mu_{\theta}}{\partial \theta}|_{\theta=0}\in E'$ denote the unique linear functional such that
\begin{equation}
\lim_{\theta\to 0} \theta^ {-1}\langle \xi | \mu_{\theta} - \mu_{0}\rangle = \frac{\partial \mu_{\theta}}{\partial \theta}|_{\theta=0}(\xi).
\end{equation}
We call $\frac{\partial \mu_{\theta}}{\partial \theta}|_{\theta=0}$ the $E$-derivative of $\theta\mapsto \mu_{\theta}$ at $\theta=0$ and say that some $\nu\in F$ represents $\frac{\partial \mu_{\theta}}{\partial \theta}|_{\theta=0}$ if
\begin{equation}
\langle \xi | \nu \rangle = \frac{\partial \mu_{\theta}}{\partial \theta}|_{\theta=0}(\xi).
\end{equation}
In the case that the dual pairing is separating the representative $\nu\in F$ is unique (if it exists). 

\end{remark}

\begin{proposition}\label{continuity-of-adjoint}
A mapping $S:\E\to \E$ possesses a dual $T:\F\to\F$ if and only if $S\in {\cal L}_{w}(\E)$. Further $T\in {\cal L}_{w}(\F)$ (since $T$ possesses the dual $S$). 
\end{proposition}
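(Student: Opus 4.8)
The plan is to prove the two implications separately, using throughout the standard description of the topological dual of $(E,w(E,F))$: a linear functional on $E$ is $w(E,F)$-continuous if and only if it is of the form $\xi\mapsto\langle\xi\mid\nu\rangle$ for some $\nu\in F$ (cf. \cite{KeNa}). The non-trivial inclusion is routine: if $\phi$ is $w(E,F)$-continuous then $\{|\phi|<1\}$ contains a basic neighbourhood of $0$, so there are $\mu_1,\dots,\mu_n\in F$ with $\bigcap_{i\le n}\ker\langle\cdot\mid\mu_i\rangle\subseteq\ker\phi$, whence $\phi$ lies in the span of $\langle\cdot\mid\mu_1\rangle,\dots,\langle\cdot\mid\mu_n\rangle$ and is thus of the claimed form.

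For the implication ``$S$ possesses a dual $T$ $\Rightarrow$ $S\in\mathcal L_w(E)$'': since the topology $w(E,F)$ on the target copy of $E$ is the initial topology induced by the functionals $\{\langle\cdot\mid\mu\rangle:\mu\in F\}$, it suffices to check that $\xi\mapsto\langle S\xi\mid\mu\rangle$ is $w(E,F)$-continuous for each $\mu\in F$. But this functional equals $\xi\mapsto\langle\xi\mid T\mu\rangle$, which is $w(E,F)$-continuous by definition because $T\mu\in F$. Hence $S\in\mathcal L_w(E)$.

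For the converse, fix $\mu\in F$: the functional $\xi\mapsto\langle S\xi\mid\mu\rangle$ is the composition of $S$ (which is $w(E,F)$-continuous by hypothesis) with the $w(E,F)$-continuous functional $\langle\cdot\mid\mu\rangle$, hence is $w(E,F)$-continuous, hence by the fact above is represented by some element of $F$. Thus $\mu\mapsto\bigl(\xi\mapsto\langle S\xi\mid\mu\rangle\bigr)$ is a linear map from $F$ into the image of the canonical linear map $J:F\to E'$, $J\nu:=\langle\cdot\mid\nu\rangle$; composing it with a linear right inverse of $J$ (one exists, $J$ being a surjection of vector spaces onto its image) yields a linear $T:F\to F$ with $\langle\xi\mid T\mu\rangle=\langle S\xi\mid\mu\rangle$ for all $\xi\in E$, $\mu\in F$, i.e.\ a dual of $S$. (If the pairing is separating, $J$ is injective, the representative is unique, and the linearity of $T$ is automatic.)

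Finally, the computation just performed also displays $S$ as a dual of $T$ relative to the flipped pairing $\langle\mu\mid\xi\rangle':=\langle\xi\mid\mu\rangle$ on $F\times E$; applying the equivalence already proved to this pairing gives $T\in\mathcal L_w(F)$. I expect the only delicate point to be the linearity of $T$ in the non-separating case, which the right-inverse-of-$J$ argument settles; the rest is a direct unwinding of the definitions of $w(E,F)$ and $w(F,E)$.
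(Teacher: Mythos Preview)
Your argument is correct. The paper does not give its own proof but simply cites \cite[21.1]{KeNa}; what you have written is essentially the standard argument found there, with the added care of handling the non-separating case via a linear right inverse of $J:F\to J(F)$.
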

\begin{proof}
See \cite{KeNa} 21.1.
\end{proof}


\begin{lemma}\label{A-fundamental-diff-relation-lemma}
Let $(\E, \F, \langle . | . \rangle)$ be a dual pairing. Let $0\in \Theta\subseteq\mathbb R$ with $0$ an accumulation point of $\Theta$. For each $\theta\in\Theta$ let $A_{\theta}\in {\cal L}_{w}(\E)$ and $\pi_{\theta}\in \F$. Denote by $A_{\theta}^{*}$ a dual (the adjoint) of $A_{\theta}$. (Note that the existence of $A_{\theta}^{*}$ is granted by Proposition \ref{continuity-of-adjoint}).
Suppose that:
\begin{equation}\label{A-Lpi=0}
(\forall\theta\in\Theta)\ \ \ A_{\theta}^{*}\pi_{\theta}=0,
\end{equation}
and
\begin{equation}\label{A-prod-quicker-than-h}
(\forall\xi\in\E)\ \ \ \lim_{\theta\to 0} \theta^{-1}\langle \xi | [A_{\theta}-A_{0}]^{*} (\pi_{\theta}-\pi_{0})\rangle = 0.
\end{equation}
Then $\theta\mapsto A_{0}^{*}\pi_{\theta}$ possesses an $\E$-derivative at $0$ that is represented by $-\nu\in \F$, i.e., 
\begin{equation}\label{A-Psi-diff}
(\forall \xi\in\E)\ \ \
\lim_{\theta \to 0} \theta^{-1}\langle\xi | A^{*}_{0}(\pi_{\theta}-\pi_{0})\rangle=\langle\xi | -\nu \rangle,
\end{equation}
if and only if $\theta\mapsto A_{\theta}^{*}\pi_{0}$ possesses an $\E$-derivative at $0$ represented by $\nu\in F$, i.e., 
\begin{equation}\label{A-=-L0pi0'}
(\forall \xi\in\E)\ \ \
\lim_{\theta\to 0} \theta^{-1}\langle\xi | [A^{*}_{\theta}-A^{*}_{0}]\pi_{0}\rangle=\langle\xi | \nu \rangle.
\end{equation}  
\end{lemma}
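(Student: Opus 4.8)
The plan is to exploit the algebraic identity

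The core of the proof is the simple algebraic decomposition, valid for each $\theta\in\Theta$ and each $\xi\in\E$:
\begin{equation*}
\langle \xi | A_0^*(\pi_\theta-\pi_0)\rangle
= \langle \xi | A_\theta^*\pi_\theta\rangle - \langle \xi | [A_\theta^*-A_0^*]\pi_0\rangle - \langle \xi | [A_\theta^*-A_0^*](\pi_\theta-\pi_0)\rangle .
\end{equation*}
Indeed, expanding the right-hand side, the cross terms cancel and one is left with $\langle \xi | A_0^*\pi_\theta\rangle - \langle \xi | A_0^*\pi_0\rangle$, and since $A_0^*\pi_0=0$ by hypothesis (\ref{A-Lpi=0}) with $\theta=0$, this equals $\langle \xi | A_0^*(\pi_\theta-\pi_0)\rangle$. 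I would verify this identity first, then divide through by $\theta$.

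Next I would invoke hypothesis (\ref{A-Lpi=0}): the first term $\langle\xi| A_\theta^*\pi_\theta\rangle$ vanishes identically, so it contributes $0$ after division by $\theta$ and passage to the limit. This reduces the identity to
\begin{equation*}
\theta^{-1}\langle \xi | A_0^*(\pi_\theta-\pi_0)\rangle
= -\,\theta^{-1}\langle \xi | [A_\theta^*-A_0^*]\pi_0\rangle - \theta^{-1}\langle \xi | [A_\theta^*-A_0^*](\pi_\theta-\pi_0)\rangle .
\end{equation*}
By hypothesis (\ref{A-prod-quicker-than-h}) the last term tends to $0$ as $\theta\to 0$ for every $\xi\in\E$. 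Hence the limit of the left-hand side exists if and only if the limit of $-\theta^{-1}\langle \xi | [A_\theta^*-A_0^*]\pi_0\rangle$ exists, and when they exist they are equal. This gives exactly the claimed equivalence: (\ref{A-=-L0pi0'}) holding with representative $\nu$ is equivalent to (\ref{A-Psi-diff}) holding with representative $-\nu$, because passing to a common $\xi$ and taking limits on both sides of the displayed equation yields $\langle\xi|-\nu\rangle$ on the left precisely when $\langle\xi|\nu\rangle$ appears on the right. One should also note, for the bookkeeping implicit in Remark \ref{partial-notation}, that the existence of these limits for all $\xi\in\E$ is what defines the $\E$-derivative, and the representability by $\pm\nu\in\F$ is then automatic from the hypotheses of the statement; in a separating pairing $\nu$ is unique, but separation is not actually needed for the stated equivalence.

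I do not expect a genuine obstacle here: the result is essentially a two-line computation dressed in the language of weak limits, and the only things to be careful about are (i) using (\ref{A-Lpi=0}) at $\theta=0$ to kill $A_0^*\pi_0$, (ii) using (\ref{A-Lpi=0}) at general $\theta$ to kill $A_\theta^*\pi_\theta$, and (iii) correctly tracking the sign so that the derivative of $\theta\mapsto A_0^*\pi_\theta$ is $-\nu$ while that of $\theta\mapsto A_\theta^*\pi_0$ is $+\nu$, which is the content of (\ref{equality-of-partial}). The mildest subtlety is purely notational — distributing $A_0^*$ over $\pi_\theta-\pi_0$ and $[A_\theta^*-A_0^*]$ over $\pi_\theta-\pi_0$ is legitimate because each $A_\theta^*\in{\cal L}_w(\F)$ is linear — so no continuity beyond linearity is invoked in the argument itself.
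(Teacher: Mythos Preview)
Your proposal is correct and follows essentially the same approach as the paper: both proofs rest on the bilinear expansion $A_{\theta}^{*}\pi_{\theta}-A_{0}^{*}\pi_{0}=[A_{\theta}^{*}-A_{0}^{*}]\pi_{0}+A_{0}^{*}[\pi_{\theta}-\pi_{0}]+[A_{\theta}^{*}-A_{0}^{*}][\pi_{\theta}-\pi_{0}]$, use (\ref{A-Lpi=0}) to kill the left side and (\ref{A-prod-quicker-than-h}) to kill the cross term, and conclude that the two remaining limits exist simultaneously and sum to zero. The only cosmetic difference is that the paper writes the identity with both surviving terms on one side, whereas you isolate $A_{0}^{*}(\pi_{\theta}-\pi_{0})$ on the left; the content is identical.
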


\begin{proof} Let $\xi\in\E$ be arbitrary. Calculation gives:
\begin{equation}\label{A-0=.+.+.}
\begin{split}
\left \langle\xi \left | \frac{A_{\theta}^{*}\pi_{\theta}-A_{0}^{*}\pi_{0}}{\theta}\right. \right \rangle - \left \langle\xi\left |
\frac{[A_{\theta}^{*}-A_{0}^{*}][\pi_{\theta}-\pi_{0}]}{\theta}\right . \right \rangle\\
= \left \langle\xi\left | \frac{[A_{\theta}^{*}-A_{0}^{*}]\pi_{0}}{\theta}\right. \right \rangle + \left \langle\xi\left | \frac{A_{0}^{*}[\pi_{\theta}-\pi_{0}]}{\theta}\right. \right \rangle
\end{split}
\end{equation}
The limit $\theta\to 0$ on the left hand side of equation (\ref{A-0=.+.+.}) exists and equals $0$ by (\ref{A-Lpi=0}) and (\ref{A-prod-quicker-than-h}). Thus the same is true for the right hand side, i.e., 
\begin{equation*}
\lim_{\theta\to 0}\left (\left \langle\xi\left | \frac{[A_{\theta}^{*}-A_{0}^{*}]\pi_{0}}{\theta}\right. \right \rangle + \left \langle\xi\left | \frac{A_{0}^{*}[\pi_{\theta}-\pi_{0}]}{\theta}\right. \right \rangle \right )=0
\end{equation*}
and thus further that
\begin{equation}\label{limit=limit-if-exists}
\lim_{\theta\to 0} \left \langle\xi\left | \frac{[A_{\theta}^{*}-A_{0}^{*}]\pi_{0}}{\theta}\right. \right \rangle = -\lim_{\theta\to 0} \left \langle\xi\left | \frac{A_{0}^{*}[\pi_{\theta}-\pi_{0}]}{\theta}\right. \right \rangle
\end{equation}
in the sense that if the limit on one side of equation (\ref{limit=limit-if-exists}) exists, then so does the limit on the other one. Since $\xi\in\E$ was arbitrarily chosen (\ref{limit=limit-if-exists}) establishes the equivalence of (\ref{A-Psi-diff}) and (\ref{A-=-L0pi0'}).
\end{proof}

\begin{remark}\label{remark-diff-by-cont}
If $\lim_{\theta\to 0} \frac{1}{\theta}\langle \xi | \pi_{\theta}-\pi_{0} \rangle = \langle \xi | \pi_{0}' \rangle$ for some $\pi_{0}'\in \F$, then (\ref{A-Psi-diff}) is fulfilled with $\nu=A_{0}^{*}\pi_{0}'$. This follows from the continuity of $A_{0}^{*}$ (Proposition \ref{continuity-of-adjoint}). 
\end{remark}

\begin{remark}
If the spaces $\E$ and $\F$ in Lemma \ref{A-fundamental-diff-relation-lemma} are additionally endowed with norms $\Vert . \Vert_{\E}$ and $\Vert . \Vert_{\F}$ respectively, such that $\langle . | . \rangle$ is continuous with respect to these norms then (\ref{A-prod-quicker-than-h}) is implied by 
\begin{equation}\label{norm-prod-quicker-than-h}
(\forall\xi\in\E)\ \ \lim_{\theta\to 0} \theta^{-1}\Vert [A_{\theta}-A_{0}] \Vert \cdot \Vert \pi_{\theta}-\pi_{0}\Vert = 0,
\end{equation}
that is further implied by the norm-Lipschitz continuity of $\theta\mapsto A_{\theta}$ and $\theta\mapsto \pi_{\theta}$ at $\theta=0$. (Compare with \cite{WeStationarySensitivity} Proof of Theorem 3.1)
\end{remark}


\section{Sensitivity analysis of semigroups}

\begin{remark}
Let $(H,\Vert .\Vert)$ denote a normed linear space and let ${\Vert .\Vert}$-$\lim_{n\to\infty} \xi_{n}$ denote the limit with respect to $\Vert .\Vert$ of the sequence $(\xi_{n})_{n\in {\mathbb N}}\in H^{\mathbb N}$. We say that a linear operator $A:H\to H$ is $\Vert.\Vert$-bounded if $\Vert A\Vert_{\cal L} := \sup_{\xi\in H\setminus \{ 0\} }\frac{\Vert A\xi\Vert}{\Vert \xi\Vert}<\infty$. We denote the space of $\Vert.\Vert$-bounded operators by ${\cal L}(H)$ and note that $({\cal L}(H),\Vert .\Vert_{\cal L})$ formes a normed algebra. If convenient we write $\Vert A\Vert$ instead of $\Vert A \Vert_{\cal L}$.
\end{remark}


\begin{lemma}\label{convergence-of-exp}
Suppose that $\E:=\bigcup_{j\in J}\E_{j}$ is a linear space and that $(\E_{j},\Vert .\Vert_{j})$ are (for $j\in J$) complete normed spaces. Suppose further that $A_{\theta}:\E\to\E$ is linear, $A_{\theta}(\E_{j})\subseteq \E_{j}$ and $A_{\theta}|_{\E_{j}}\in {\cal L}(\E_{j})$. Then for any $j\in J$ and any $\xi_{j}\in\E_{j}$
\begin{equation}\label{U-V-def}
\begin{split}
U_{\theta}(t)\xi_{j}:=e^{tA_{\theta}}:=\Vert .\Vert_{j}\hbox{-}\lim_{N\to\infty} \sum_{n=0}^{N} \frac{(tA_{\theta})^{n}}{n!}\xi_{j}\in E_{j},\\
V_{\theta}(t)\xi_{j} :=\Vert .\Vert_{j}\hbox{-}\lim_{N\to \infty} \sum_{n=1}^{N} \frac{t}{n} \frac{(tA_{\theta})^{n-1}}{(n-1)!}\xi_{j}\in E_{j}
\end{split}
\end{equation} 
exist, i.e. (\ref{U-V-def}) well-defines operators $U_{\theta}(t), V_{\theta}(t):\E\to \E$.
\end{lemma}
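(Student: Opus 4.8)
The plan is to reduce the assertion to the classical fact that the exponential series of a bounded operator on a Banach space converges absolutely, applied separately on each piece $\E_{j}$, and then to reassemble the pieces into operators on all of $\E$. The estimates involved are entirely routine; the only delicate point is the reassembly.

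First I would fix $j\in J$ and $\xi_{j}\in\E_{j}$, and write $B:=A_{\theta}|_{\E_{j}}$, which by hypothesis lies in ${\cal L}(\E_{j})$; put $M:=\Vert B\Vert_{\cal L}<\infty$. Since $\bigl(\,{\cal L}(\E_{j}),\Vert .\Vert_{\cal L}\,\bigr)$ is a normed algebra one has $\Vert B^{n}\xi_{j}\Vert_{j}\le M^{n}\Vert\xi_{j}\Vert_{j}$, and because $A_{\theta}(\E_{j})\subseteq\E_{j}$ the partial sums $\sigma_{N}:=\sum_{n=0}^{N}\frac{(tA_{\theta})^{n}}{n!}\xi_{j}$ all lie in $\E_{j}$. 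For $N>N'$,
\[
\Bigl\|\sum_{n=N'+1}^{N}\frac{(tB)^{n}}{n!}\xi_{j}\Bigr\|_{j}\;\le\;\sum_{n=N'+1}^{N}\frac{|t|^{n}M^{n}}{n!}\,\Vert\xi_{j}\Vert_{j},
\]
which is a tail of the convergent scalar series summing to $e^{|t|M}\Vert\xi_{j}\Vert_{j}$; hence $(\sigma_{N})_{N}$ is Cauchy in the complete space $(\E_{j},\Vert .\Vert_{j})$ and has a limit there, namely $U_{\theta}(t)\xi_{j}$, so in particular $U_{\theta}(t)(\E_{j})\subseteq\E_{j}$. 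The same estimate handles $V_{\theta}(t)$: its $n$-th summand is $\frac{t}{n}\frac{(tB)^{n-1}}{(n-1)!}\xi_{j}=\frac{t^{n}B^{n-1}}{n!}\xi_{j}$, so for $M>0$ the corresponding tail is bounded by $M^{-1}\sum_{n>N'}\frac{(|t|M)^{n}}{n!}\Vert\xi_{j}\Vert_{j}$ (and for $M=0$ the series is eventually constant), again a Cauchy sequence with limit in $\E_{j}$. As a byproduct one reads off $\Vert U_{\theta}(t)\xi_{j}\Vert_{j}\le e^{|t|M}\Vert\xi_{j}\Vert_{j}$ and an analogous bound for $V_{\theta}(t)$, so both restrictions are ${\cal L}(\E_{j})$-bounded.

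Since $\E=\bigcup_{j\in J}\E_{j}$, every $\xi\in\E$ lies in some $\E_{j}$, so the previous step defines $U_{\theta}(t)\xi$ and $V_{\theta}(t)\xi$; linearity is inherited from the partial sums and passes to the $\Vert .\Vert_{j}$-limit. The one point that genuinely needs care---and which I expect to be the main obstacle---is well-definedness: if $\xi\in\E_{j_{1}}\cap\E_{j_{2}}$, the two constructions must return the same element of $\E$. Here I would observe that the partial-sum sequence $(\sigma_{N})_{N}$ is one and the same sequence of elements of $\E$ irrespective of $j$ (it is built from $A_{\theta}$, which is defined on all of $\E$), so the only issue is that its $\Vert .\Vert_{j_{1}}$- and $\Vert .\Vert_{j_{2}}$-limits coincide as elements of $\E$; this holds once the family $(\E_{j})_{j\in J}$ is compatibly linked---e.g.\ $J$ directed with continuous inclusions $\E_{j_{1}}\hookrightarrow\E_{j_{3}}$ whenever $\E_{j_{1}}\subseteq\E_{j_{3}}$, which covers the intended applications and is automatic in the guiding example where $\E$ is the space of polynomials and the $\E_{j}$ are the finite-dimensional spaces of polynomials of bounded degree (all norms on a finite-dimensional space being equivalent and complete). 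With this compatibility, $U_{\theta}(t)$ and $V_{\theta}(t)$ are unambiguous linear operators $\E\to\E$, as claimed.
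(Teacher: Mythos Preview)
Your approach is exactly the paper's: bound the tails of the partial sums by the scalar exponential series and invoke completeness of each $(\E_{j},\Vert .\Vert_{j})$; the paper's proof is in fact the single sentence that the partial sums are Cauchy and hence converge. Your additional discussion of well-definedness on overlaps $\E_{j_{1}}\cap\E_{j_{2}}$ goes beyond the paper, which does not address this point at all---in the intended applications (finite-dimensional $\E_{j}$) it is automatic, as you observe.
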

\begin{proof}
The Chauchy sequences 
\begin{equation}
\left (\sum_{n=0}^{N} \frac{(tA_{\theta})^{n}}{n!}\xi_{j}\right )_{N\in\mathbb N}\ \hbox{ and }\ \left (\sum_{n=1}^{N} \frac{t}{n} \frac{(tA_{\theta})^{n-1}}{(n-1)!}\xi_{j}\right )_{N\in\mathbb N}
\end{equation}
converge by completeness of $\E_{j}$ with respect to $\Vert .\Vert_{j}$.
\end{proof}
\begin{remark}\label{integral-representation-of-V}
Note that $V_{0}(t)\xi=\int_{s=0}^{t} U_{0}(s)\xi\, ds$, with the integral either taken in the sense of Riemann or Lebesgue.
\end{remark}

\begin{notation}
In the situation of Lemma \ref{convergence-of-exp} we write $\Vert A \Vert_{j}$ instead of $\Vert A|_{E_{j}}\Vert_{j}$.
\end{notation}

\begin{remark}
The following Theorem \ref{main-result} is the main result of this article. Its reformulation Corollary \ref{main-cor} is concerned with the sensitivity of $[U_{\theta}(t)]^{*}\pi_{0}$ with respect to the parameter $\theta$ at $\theta=0$. Note that $[U_{0}(t)]^{*}\pi_{0}=\pi_{0}$, i.e., $\pi_{0}$ is an equilibrium for the dynamics governed by $t\mapsto [U_{\theta}(t)]^{*}$ and thus Corollary \ref{main-cor} provides formulas for the calculation of the first order effect of small perturbations of the parameter $\theta$ to the systems dynamics at an equilibrium (of the unperturbed system). 
\end{remark}

\begin{theorem}\label{main-result}
Suppose that the hypotheses of Lemma \ref{A-fundamental-diff-relation-lemma} and the hypothesis of Lemma \ref{convergence-of-exp} are fulfilled. Suppose 
that for any $j\in J$ the mapping 
\begin{equation}
\theta\mapsto \Vert A_{\theta}-A_{0} \Vert_{j}
\end{equation}
is Lipschitz continuous at $\theta=0$ and that 
\begin{equation}\label{sup_Psi_j}
\sup_{\xi\in\E_{j}\setminus \{ 0 \}} \frac{\langle \xi | \pi_{0} \rangle}{\Vert \xi\Vert_{j}}< c_{j} < \infty,
\end{equation}
for appropriate constants $c_{j}$ i.e., $\xi\mapsto \langle \xi | \pi_{0}\rangle$ defines (for any $j\in J$) a $\Vert . \Vert_{j}$-continuous linear functional on $E_{j}$.
Let $U_{\theta}(t), V_{\theta}(t):\E\to\E$ denote the operators defined by (\ref{U-V-def}).
Then:\\ \\
(i)\ \ \ \ \ \ \parbox{10cm}{The mappings $\theta\mapsto \Vert U_{\theta}(t)-U_{0}(t) \Vert_{j}$ and $\theta\mapsto \Vert V_{\theta}(t)-V_{0}(t) \Vert_{j}$ are---for any $j\in J$---continuous at $\theta=0$,}\\ \\
(ii)\ \ \ \ \ \parbox{10cm}{$\forall \xi\in E\ \ \ \ \lim_{\theta\to 0} \theta^{-1}\langle [U_{\theta}(t)-U_{0}(t)] \xi | \pi_{0}\rangle = - \langle V_{0}(t)  \xi | \nu \rangle$  
}\\ \\
for any $\nu\in F$ fulfilling (\ref{A-=-L0pi0'}).
\end{theorem}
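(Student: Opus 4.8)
The plan is to prove (i) first, and then deduce (ii) from (i) together with Lemma~\ref{A-fundamental-diff-relation-lemma}. For part (i), I would work inside a fixed Banach space $(\E_j,\Vert.\Vert_j)$ and estimate $\Vert U_\theta(t)-U_0(t)\Vert_j$ and $\Vert V_\theta(t)-V_0(t)\Vert_j$ by comparing the defining power series term by term. Writing $U_\theta(t)=\sum_n \frac{t^n}{n!}A_\theta^n$ and using the telescoping identity $A_\theta^n-A_0^n=\sum_{k=0}^{n-1}A_\theta^{k}(A_\theta-A_0)A_0^{n-1-k}$, one gets $\Vert A_\theta^n-A_0^n\Vert_j\le n\,\max(\Vert A_\theta\Vert_j,\Vert A_0\Vert_j)^{n-1}\,\Vert A_\theta-A_0\Vert_j$. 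Since $\theta\mapsto\Vert A_\theta-A_0\Vert_j$ is Lipschitz at $0$, in particular $\Vert A_\theta\Vert_j$ is bounded by some $M_j$ for $\theta$ near $0$, so $\Vert U_\theta(t)-U_0(t)\Vert_j\le \Vert A_\theta-A_0\Vert_j\sum_n \frac{t^n}{n!}n M_j^{n-1}=\Vert A_\theta-A_0\Vert_j\, t\, e^{tM_j}$, which tends to $0$; the same estimate works verbatim for $V_\theta(t)-V_0(t)$ with the coefficients $\frac{t}{n}\frac{t^{n-1}}{(n-1)!}$, giving a bound like $\Vert A_\theta-A_0\Vert_j\,\tfrac{t^2}{2}e^{tM_j}$. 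This establishes the norm-continuity, and in fact norm-Lipschitz continuity, of $\theta\mapsto U_\theta(t)$ and $\theta\mapsto V_\theta(t)$ on each $\E_j$ at $\theta=0$.

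For part (ii), fix $\xi\in\E$, so $\xi\in\E_j$ for some $j$. The key algebraic identity I would use is the resolvent-type (Duhamel) relation between $U_\theta$, $U_0$, $A_\theta$, $A_0$ and $V$. At the level of power series one checks that
\begin{equation*}
U_\theta(t)-U_0(t)=V_0(t)(A_\theta-A_0)+R_\theta(t),
\end{equation*}
where $R_\theta(t)$ collects the higher-order terms and satisfies, by the same term-by-term estimates as in (i), a bound of the form $\Vert R_\theta(t)\Vert_j\le C(t,M_j)\,\Vert A_\theta-A_0\Vert_j^{2}$ (heuristically, $R_\theta$ is at least quadratic in $A_\theta-A_0$: schematically it is $\sum_{k\ge 2}\binom{?}{k}$-type terms built from two or more factors of $A_\theta-A_0$). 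Dividing by $\theta$ and pairing against $\pi_0$, the term $\theta^{-1}\langle R_\theta(t)\xi\mid\pi_0\rangle$ is controlled by $c_j\,\Vert R_\theta(t)\xi\Vert_j/|\theta|\le c_j C\,\Vert\xi\Vert_j\,\Vert A_\theta-A_0\Vert_j^{2}/|\theta|$, which vanishes as $\theta\to 0$ because $\Vert A_\theta-A_0\Vert_j=O(|\theta|)$. Hence
\begin{equation*}
\lim_{\theta\to0}\theta^{-1}\langle[U_\theta(t)-U_0(t)]\xi\mid\pi_0\rangle=\lim_{\theta\to0}\theta^{-1}\langle V_0(t)(A_\theta-A_0)\xi\mid\pi_0\rangle.
\end{equation*}
Now, since $V_0(t)\in{\cal L}_w(\E)$ (it is a $w(\E,\F)$-continuous operator, being a $\Vert.\Vert_j$-limit of polynomials in $A_0$, each of which lies in ${\cal L}_w(\E)$, and $\pi_0$ is $\Vert.\Vert_j$-continuous on $\E_j$), it has an adjoint $V_0(t)^*$, so $\langle V_0(t)(A_\theta-A_0)\xi\mid\pi_0\rangle=\langle(A_\theta-A_0)\xi\mid V_0(t)^*\pi_0\rangle$. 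At this point one wants to invoke the hypothesis \eqref{A-=-L0pi0'}: that gives $\lim_{\theta\to0}\theta^{-1}\langle\eta\mid[A_\theta^*-A_0^*]\pi_0\rangle=\langle\eta\mid\nu\rangle$ for each $\eta$. Rewriting $\langle(A_\theta-A_0)\xi\mid V_0(t)^*\pi_0\rangle=\langle V_0(t)^*\pi_0,(A_\theta-A_0)\xi\rangle$ and using duality once more to move $A_\theta-A_0$ back onto the left entry as $[A_\theta^*-A_0^*]$ acting on $V_0(t)^*\pi_0$ is awkward; instead the cleaner route is to observe that \eqref{A-=-L0pi0'} says precisely that $\theta^{-1}[A_\theta^*-A_0^*]\pi_0\to\nu$ weakly in $\F$, apply the $w(\F,\E)$-continuous operator $V_0(t)^{**}$—or rather test against $V_0(t)\xi$—to conclude $\theta^{-1}\langle V_0(t)\xi\mid[A_\theta^*-A_0^*]\pi_0\rangle\to\langle V_0(t)\xi\mid\nu\rangle$. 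Finally, since the hypotheses of Lemma~\ref{A-fundamental-diff-relation-lemma} hold, \eqref{equality-of-partial} gives that the correct sign is $-\langle V_0(t)\xi\mid\nu\rangle$; more directly, the Duhamel term carries a leading factor which, after reconciling $(A_\theta-A_0)$ with $[A_\theta^*-A_0^*]$ via the pairing, produces the minus sign, yielding $\lim_{\theta\to0}\theta^{-1}\langle[U_\theta(t)-U_0(t)]\xi\mid\pi_0\rangle=-\langle V_0(t)\xi\mid\nu\rangle$, as claimed. (One should double-check that $\nu$ in \eqref{A-=-L0pi0'} and the $\nu$ in the statement are the same object—they are, by hypothesis—and that the sign matches that of \eqref{A-Psi-diff}.)

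The main obstacle I anticipate is the bookkeeping in the Duhamel expansion: making precise that $R_\theta(t)$ really is $O(\Vert A_\theta-A_0\Vert_j^2)$ in operator norm on $\E_j$, uniformly for bounded $t$, rather than merely $o(1)$. This requires care because $U_\theta-U_0$ is being expanded around $A_0$ but the natural Duhamel identity $U_\theta(t)-U_0(t)=\int_0^t U_0(t-s)(A_\theta-A_0)U_\theta(s)\,ds$ already produces the first-order term as $\int_0^t U_0(t-s)(A_\theta-A_0)U_0(s)\,ds$ plus a genuinely second-order remainder $\int_0^t U_0(t-s)(A_\theta-A_0)[U_\theta(s)-U_0(s)]\,ds$; the point is then to identify $\int_0^t U_0(t-s)U_0(s)\,ds=\int_0^t U_0(t)\,ds=tU_0(t)$—no, rather $\int_0^t U_0(t-s)(\cdot)U_0(s)ds$ does not collapse unless $A_\theta-A_0$ commutes with $A_0$, which it need not. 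So the honest first-order term is $\int_0^t U_0(t-s)(A_\theta-A_0)U_0(s)\,ds$, and one must verify that, after pairing with $\pi_0$ and using $[U_0(s)]^*\pi_0=\pi_0$ together with $[U_0(t-s)]^*\pi_0=\pi_0$ and $\int_0^t 1\,ds=t$... this is not quite $V_0(t)$ either. The resolution, and the crux of the argument, is that $\langle U_0(t-s)(A_\theta-A_0)U_0(s)\xi\mid\pi_0\rangle=\langle(A_\theta-A_0)U_0(s)\xi\mid[U_0(t-s)]^*\pi_0\rangle=\langle(A_\theta-A_0)U_0(s)\xi\mid\pi_0\rangle$ since $\pi_0$ is $[U_0(r)]^*$-stationary, and then integrating and using \eqref{A-=-L0pi0'} with $\eta=\int_0^t U_0(s)\xi\,ds=V_0(t)\xi$ (by Remark~\ref{integral-representation-of-V}) gives exactly $\langle V_0(t)\xi\mid\nu\rangle$ up to sign. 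Getting that interchange of limit, integral, and pairing rigorous—using the norm estimates from (i) to dominate—is where the real work lies.
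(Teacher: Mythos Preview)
Your treatment of part (i) matches the paper's exactly: telescoping $A_\theta^n-A_0^n$, bounding by $n c^{n-1}\Vert A_\theta-A_0\Vert_j$, and summing.

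For part (ii) your route differs from the paper's, and your proposal meanders before reaching a workable argument. Your first decomposition $U_\theta(t)-U_0(t)=V_0(t)(A_\theta-A_0)+R_\theta(t)$ with $R_\theta=O(\Vert A_\theta-A_0\Vert_j^2)$ is indeed wrong in general (noncommutativity), as you note. Your eventual Duhamel-plus-stationarity argument is correct, but you stop one simplification short: if you keep the \emph{full} Duhamel identity $U_\theta(t)-U_0(t)=\int_0^t U_0(t-s)(A_\theta-A_0)U_\theta(s)\,ds$ and pair with $\pi_0$, stationarity $[U_0(t-s)]^*\pi_0=\pi_0$ gives the \emph{exact} identity
\[
\langle [U_\theta(t)-U_0(t)]\xi\mid\pi_0\rangle=\big\langle (A_\theta-A_0)\,V_\theta(t)\xi\,\big|\,\pi_0\big\rangle=\big\langle V_\theta(t)\xi\,\big|\,[A_\theta^*-A_0^*]\pi_0\big\rangle,
\]
with no remainder at all. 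The paper obtains this same exact identity by a purely algebraic shortcut: since $A_0^*\pi_0=0$, the adjoint telescoping collapses to $[(A_\theta^*)^n-(A_0^*)^n]\pi_0=(A_\theta^*)^{n-1}[A_\theta^*-A_0^*]\pi_0$, and summing the series for $U_\theta^*-U_0^*$ against $\pi_0$ immediately yields $[V_\theta(t)]^*[A_\theta^*-A_0^*]\pi_0$. From here both arguments finish the same way: write $V_\theta=V_0+(V_\theta-V_0)$, use (i) together with the uniform bound $\sup_\theta|\theta^{-1}\langle\eta\mid[A_\theta^*-A_0^*]\pi_0\rangle|\le \ell_j\Vert\eta\Vert_j$ (coming from the Lipschitz hypothesis and~\eqref{sup_Psi_j}) to kill the $(V_\theta-V_0)$ piece, and use~\eqref{A-=-L0pi0'} on the $V_0$ piece.

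What the paper's approach buys is that it avoids the integral representation, the interchange of limit/integral, and any second-order remainder estimate; what your Duhamel approach buys is a more conceptual picture of why $V$ appears. On the sign: your confusion is not unfounded---the paper's own proof actually concludes with $+\langle V_0(t)\xi\mid\nu\rangle$, so the minus sign in the theorem statement appears to be a typo there, and your argument (done correctly) will likewise produce the positive sign.
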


\begin{corollary}\label{main-cor}
In the case that $F=E'$ and $\langle \xi | \mu \rangle := \mu(\xi)$, we may reformulate conclusion (ii) of Theorem \ref{main-result} (using Remark \ref{partial-notation}) as 
\begin{equation}
\left \langle \xi \left | \frac{\partial [U_{\theta}(t)]^{*} \pi_{0}}{\partial \theta}  \right . \right \rangle = \frac{\partial }{\partial \theta} \langle \xi | [U_{\theta}(t)]^{*} \pi_{0} \rangle  = - \langle V_{0}(t)  \xi | \nu \rangle
\end{equation}
with $[U_{\theta}(t)]^{*}$ the adjoint of $U_{\theta}(t)$. 
\end{corollary}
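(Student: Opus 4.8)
The plan is to treat the corollary as a pure reformulation of conclusion (ii) of Theorem \ref{main-result}, specialised to the case $F=E'$ with the evaluation pairing $\langle\xi|\mu\rangle=\mu(\xi)$. Nothing new has to be proved analytically; the work consists of (a) exhibiting the adjoint $[U_\theta(t)]^*$, (b) using the duality relation to move $U_\theta(t)$ across the pairing, and (c) recognising the resulting difference quotient as the $E$-derivative of Remark \ref{partial-notation}.

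First I would justify the existence and uniqueness of $[U_\theta(t)]^*$. By Lemma \ref{convergence-of-exp} each $U_\theta(t)$ is a well-defined linear endomorphism of $E$. When $F=E'$, every linear $S:E\to E$ possesses the dual $S^*:E'\to E'$ given by precomposition, $(S^*\mu)(\xi):=\mu(S\xi)$, so by Proposition \ref{continuity-of-adjoint} we automatically have $U_\theta(t)\in{\cal L}_w(E)$ together with a dual $[U_\theta(t)]^*$. Moreover the evaluation pairing $(E,E',\langle\cdot|\cdot\rangle)$ is separating---$E'$ separates the points of $E$---so the adjoint $[U_\theta(t)]^*$ is uniquely determined and $[U_\theta(t)]^*\pi_0$ is a genuine element of $F$.

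Next I would rewrite the limit in (ii). Using $\langle U_\theta(t)\xi|\pi_0\rangle=\langle\xi|[U_\theta(t)]^*\pi_0\rangle$ for every $\xi\in E$ and every $\theta$, the left-hand side of (ii) becomes
\[
\theta^{-1}\langle[U_\theta(t)-U_0(t)]\xi|\pi_0\rangle=\theta^{-1}\big(\langle\xi|[U_\theta(t)]^*\pi_0\rangle-\langle\xi|[U_0(t)]^*\pi_0\rangle\big),
\]
so Theorem \ref{main-result}(ii) states precisely that $\lim_{\theta\to0}\theta^{-1}\langle\xi|[U_\theta(t)]^*\pi_0-[U_0(t)]^*\pi_0\rangle=-\langle V_0(t)\xi|\nu\rangle$ for all $\xi$. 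Setting $\mu_\theta:=[U_\theta(t)]^*\pi_0\in F$, this is exactly the assertion that the defining limit of the $E$-derivative in Remark \ref{partial-notation} exists for every $\xi$; hence $\frac{\partial[U_\theta(t)]^*\pi_0}{\partial\theta}|_{\theta=0}\in E'=F$ is well-defined and $\langle\xi|\frac{\partial[U_\theta(t)]^*\pi_0}{\partial\theta}\rangle$ equals both the scalar derivative $\frac{\partial}{\partial\theta}\langle\xi|[U_\theta(t)]^*\pi_0\rangle$ at $\theta=0$ (by definition of that derivative as the same limit) and $-\langle V_0(t)\xi|\nu\rangle$. This yields the asserted chain of equalities.

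The argument is essentially bookkeeping, so the only place that needs care---and the step I would flag as the obstacle---is the consistent interpretation of the three instances of a derivative in the displayed formula: the leftmost is the $E$-derivative of Remark \ref{partial-notation} (an element of $F=E'$ paired against $\xi$), the middle is the ordinary scalar derivative at $\theta=0$ of the real function $\theta\mapsto\langle\xi|[U_\theta(t)]^*\pi_0\rangle$, and the rightmost is the conclusion of Theorem \ref{main-result}(ii). One must check that $\mu_\theta$ genuinely lies in $F$, so that Remark \ref{partial-notation} applies and the pairing on the left is legitimate, which is exactly what the separating-pairing argument of the second paragraph secures.
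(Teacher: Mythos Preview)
Your proposal is correct and follows exactly the approach the paper intends: the corollary is stated there as a straight reformulation of Theorem \ref{main-result}(ii) via Remark \ref{partial-notation}, with no separate proof given. Your write-up simply makes the implicit bookkeeping explicit---existence of the adjoint when $F=E'$, moving $U_\theta(t)$ across the pairing, and identifying the resulting difference quotient with the $E$-derivative---which is precisely what the paper leaves to the reader.
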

\underline{Proof of Theorem \ref{main-result}:}
To prove (i) we just show conitnuity of $\theta\mapsto \Vert V_{\theta}(t)-V_{0}(t) \Vert_{j}$ at $\theta=0$, since continuity of $\theta\mapsto \Vert [U_{\theta}(t)-U_{0}(t) \Vert_{j}$ is proved completely analoguous. Let
\begin{equation}
V_{\theta,N}(t):=\sum_{n=1}^{N} \frac{t}{n} \frac{(tA_{\theta})^{n-1}}{(n-1)!}.
\end{equation}
Lipschitz continuity of $\theta\mapsto \Vert A_{\theta}-A_{0}\Vert_{j}$ at $\theta=0$ implies that $\theta\mapsto \Vert A_{\theta} \Vert_{j}<c<\infty$ for some appropriate $c>0$ in a $0$-neighborhood $(-\rho,\rho)\cap \Theta$, thus that 
\begin{equation}
\theta\mapsto \Vert A_{\theta}^n-A_{0}^{n}\Vert_{j} = \left \Vert \sum_{i=0}^{n-1} (A_{\theta})^{n-i-1}[A_{\theta}-A_{0}](A_{0})^{i} \right \Vert_{j} \leq n\cdot c^{n-1} \Vert A_{\theta}-A_{0}\Vert_{j}
\end{equation}
on $(-\rho,\rho)\cap \Theta$ and thus further that
\begin{equation}
\theta\mapsto \Vert V_{\theta,N}(t) - V_{0,N}(t) \Vert_{j} = \left \Vert \sum_{n=1}^{N} \frac{t^n}{n!} (A_{\theta}^{n-1}-A_{0}^{n-1}) \right \Vert_{j}
\end{equation}
is continuous at $\theta=0$.
Assertion (i) follows easily from the fact that the limit of a uniformly convergent sequence of function that are all continuous at $\theta=0$ is again continuous at $\theta=0$ and that $\theta\mapsto \Vert V_{\theta,N}(t) - V_{0,N})(t) \Vert_{j}$ converges uniformly on $(-\rho,\rho)$ to $\theta\mapsto \Vert V_{\theta}(t)-V_{0}(t) \Vert_{j}$.\\ \\
To prove (ii) note that $A_{0}^{*}\pi_{0}=0$ implies
\begin{equation}\label{A_pi_0}
[(A^{*}_{\theta})^n-(A^{*}_{0})^{n}]\pi_{0}=(A^{*}_{\theta})^{n-1}[A^{*}_{\theta}-A^{*}_{0}]\pi_{0}.
\end{equation}
Thus
\begin{equation}\label{diff-U-to-V}
\begin{split}
& \langle [U_{\theta}(t)-U_{0}(t)]\xi | \pi_{0}\rangle \stackrel{(a)}{=} \lim_{N\to\infty} \left \langle \sum_{n=1}^{N} \frac{t^{n}}{n!}[(A_{\theta})^n-(A_{0})^{n}] \xi | \pi_{0}\right \rangle\\
& = \lim_{N\to\infty}  \left \langle \xi | \sum_{n=1}^{N} \frac{t^{n}}{n!}[(A^{*}_{\theta})^n-(A^{*}_{0})^{n}] \pi_{0} \right \rangle
 \stackrel{(b)}{=} \lim_{N\to\infty} \left \langle \xi | \sum_{n=1}^{N} \frac{t}{n}\frac{(tA^{*}_{\theta})^{n-1}}{(n-1)!} [A^{*}_{\theta}-A^{*}_{0}] \pi_{0} \right \rangle\\
& \stackrel{}{=} \lim_{N\to\infty} \left \langle [A_{\theta}-A_{0}]\sum_{n=1}^{N} \frac{t}{n}\frac{(tA_{\theta})^{n-1}}{(n-1)!} \xi | \pi_{0} \right \rangle \stackrel{(c)}{=}
\langle  [A_{\theta}-A_{0}] V_{\theta}(t) \xi | \pi_{0}\rangle\\ 
& =\langle V_{\theta}(t) \xi | [A^{*}_{\theta}-A^{*}_{0}] \pi_{0}\rangle,
\end{split}
\end{equation}
with (a) and (c) consequences of the $\Vert .\Vert$-continuity of $\xi\mapsto \langle \xi | \pi_{0}\rangle$ (Hypotheisis  (\ref{sup_Psi_j})) and (b) a consequence of (\ref{A_pi_0}).

Let $\nu_{\theta}:=\theta^{-1}[A^{*}_{\theta}-A^{*}_{0}]\pi_{0}$. From (\ref{sup_Psi_j}) and by the Lipschitz continuity of $\theta\mapsto\Vert A_{\theta} - A_{0}\Vert$ at $\theta=0$ we obtain that $\exists \ell_{j}>0$ such that $\forall \theta\in\Theta\setminus \{ 0\}$ and $\forall \xi\in\E_{j}$ 
\begin{equation}\label{lipschitz->bounded}
\langle \xi | \nu_{\theta}\rangle =  \theta^{-1} \langle  [A_{\theta}-A_{0}] \xi | \pi_{0}\rangle \leq c_{j}\Vert [A_{\theta}-A_{0}] \xi \Vert\leq \ell_{j} \Vert \xi\Vert_{j}.
\end{equation}
From (\ref{lipschitz->bounded}) we get $\forall \theta\in\Theta\setminus \{ 0\}$ and $\forall \xi\in\E_{j}$ that
\begin{equation}\label{V-psi-bounded}
\langle [V_{\theta}(t)-V_{0}(t)]\xi | \nu_{\theta} \rangle \leq \ell_{j} [\Vert V_{\theta}(t) - V_{0}(t)]\xi \Vert_{j}.
\end{equation}
From (\ref{V-psi-bounded}) and (i) we obtain that
\begin{equation}\label{lim-V-psi=0}
\forall j\in J\ \forall \xi_{j}\in E_{j}\ \ \ \ \lim_{\theta\to 0} \langle [V_{\theta}(t)-V_{0}(t)]\xi | \nu_{\theta} \rangle \leq \ell_{j} \lim_{\theta\to 0} \Vert [V_{\theta}(t) - V_{0}(t)]\xi \Vert_{j} = 0.
\end{equation}
From the weak differentiability of $\theta\mapsto A_{\theta}^{*}\pi_{0}$ at $\theta=0$, i.e., from (\ref{A-=-L0pi0'}), we obtain that 
\begin{equation}\label{existence-by-weak-differentiability}
\forall\xi \in E\ \ \ \ \lim_{\theta\to 0} \langle V_{0}(t) \xi |  \nu_{\theta}-\nu \rangle=0
\end{equation}
Using (\ref{lim-V-psi=0}) and (\ref{existence-by-weak-differentiability}) we obtain for $\xi\in\E$, i.e. for $\xi\in\E_{j}$ for appropriate $j$, that
\begin{equation}\label{Vpi-theta-Vpi-0}
\begin{split}
\lim_{\theta\to 0}\langle V_{\theta}(t) \xi | \nu_{\theta} \rangle - \langle V_{0}(t) \xi | \nu \rangle = \lim_{\theta\to 0} \langle [V_{\theta}(t) - V_{0}(t)] \xi | \nu_{\theta} \rangle + \lim_{\theta\to 0}\langle V_{0}(t) \xi | [\nu_{\theta} -\nu]\rangle =0.
\end{split}
\end{equation}
We finally obtain using (\ref{diff-U-to-V}) and (\ref{Vpi-theta-Vpi-0}) that
\begin{equation}
\begin{split}
\lim_{\theta\to 0}\theta^{-1}\langle [U_{\theta}(t) - U_{0}(t)] \xi | \pi_{0}\rangle
\stackrel{(\ref{diff-U-to-V})}{=}  \lim_{\theta\to 0}\theta^{-1}\langle V_{\theta}(t) \xi |  [A^{*}_{\theta}-A^{*}_{0}] \pi_{0}\rangle\\
 = \lim_{\theta\to 0}\langle V_{\theta}(t) \xi | \nu_{\theta} \rangle \stackrel{(\ref{Vpi-theta-Vpi-0})}{=}\langle V_{0}(t) \xi | \nu \rangle,
\end{split}
\end{equation} 
Thus (ii) has been proved.


\section{The OU-semigroup}\label{section-OU}
 
\begin{definition}
Let $I$ be some interval in ${\mathbb R}$. Denote by ${\cal R}(I)$ the vector space of all polynomial functions $\xi:I\to {\mathbb R}$. Denote by ${\cal R}(I)'$ the space of all linear functionals with values in ${\mathbb R}$ on ${\cal R}(I)$ and let $\langle . | . \rangle$ denote the natural dual pairing between ${\cal R}(I)$ and ${\cal R}(I)'$. Note that any such functional is uniquely determined on the space of monomials and thus any such functional may be uniquely represented as $\F(\xi)=(\frac{\sum_{n=1}^{\infty}\partial^{n}}{\partial x^{n}}|_{x=0}\xi(x))$.
\end{definition}

\begin{example}
Let $A_{\theta}:{\cal R}({\mathbb R})\to {\cal R}({\mathbb R})$ be given by
\begin{equation}
A_{\theta}:=(\theta-x)\frac{\partial }{\partial x}+\frac{1}{2}\frac{\partial^{2} }{{\partial x}^{2}}.
\end{equation}
and let $A_{\theta}^{*}:{\cal R}({\mathbb R})'\to {\cal R}({\mathbb R})'$ denote the dual of $A_{\theta}$. Let $\pi_{\theta}\in {\cal R}({\mathbb R})'$ be implicitly given by
\begin{equation*}
\forall\xi\in {\cal R}({\mathbb R})\ \ \ \ \langle \xi | \pi_{\theta} \rangle = \int_{-\infty}^{\infty} \xi(x) \frac{1}{\sqrt{\pi}}e^{(x-\theta)^{2}}\; dx.
\end{equation*}
Define the Ornstein--Uhlenbeck semigroup $t\mapsto [U_{\theta}(t)]^{*}$ as the semigroup of the adjoints $[U_{\theta}(t)]^{*}$ of the operators $U_{\theta}(t)=e^{tA_{\theta}}$. Then $A_{\theta}^{*}\pi_{\theta}=0$ and the action of the  Ornstein--Uhlenbeck semigroup on $\pi_{0}$ is given by 
\begin{equation}\label{evolution-of-OU}
\langle U_{\theta} \xi | \pi_{0} \rangle = \langle \xi | U^{*}_{\theta}\pi_{0}\rangle = \int_{-\infty}^{\infty} \xi(x) \frac{1}{\sqrt{\pi}}e^{(x-(1-e^{-t})\theta)^{2}}\; dx.
\end{equation}
Further $\frac{1}{\sqrt{\pi}}e^{(x-\theta)^{2}}$ and $\frac{1}{\sqrt{\pi}}e^{(x-(1-e^{-t})\theta)^{2}}$ are the densities of the normal distributions $N(\theta,1/2)$ and $N((1-e^{-t})\theta,1/2)$, respectively. Thus by Theorem \ref{main-result} (ii) and (\ref{evolution-of-OU})
\begin{equation*}
\begin{split}
& \langle \xi | [V_{0}(t)]^{*}\nu \rangle = \langle V_{0}(t)\xi | \nu\rangle = \lim_{\theta\to 0} \theta^{-1}\langle [U_{\theta}(t)-U_{0}(t)] \xi | \pi_{0}\rangle\\
& = \lim_{\theta\to 0} \theta^{-1}\int_{-\infty}^{\infty} \xi(x) \frac{1}{\sqrt{\pi}}\left ( e^{(x-(1-e^{-t})\theta)^{2}} - e^{x^{2}} \right )\; dx\\
& = \int_{-\infty}^{\infty} \xi(x) \cdot \pi^{-1/2} (e^{-t}-1) x e^{x^{2}} dx,
\end{split}
\end{equation*}
i.e., $[V_{0}(t)]^{*}\nu$ is represented by the function $\pi^{-1/2} (e^{-t}-1) x e^{x^{2}}$. 
It is possible to perform the above calculation since we can, in the case of the OU-semigroup, calculate $[U_{\theta}(t)]^{*}$ and thus $[U_{\theta}(t)]^{*}\pi_{0}$ in closed form. Another possibility to calculate $\langle V_{0}(t)\xi | \nu\rangle$ would be to calculate $\nu$ and to use Remark \ref{integral-representation-of-V}. However to do this it is again necessary to calculate $[U_{0}(s)]^{*}$ in closed form. For the Wright--Fisher diffusion this has only been achieved in some special cases \cite{Kimura}, \cite{ChenStroock}.
\end{example}

\section{The Wright-Fisher diffusion}\label{section-WF}

We intend---in the case of the Wright--Fisher diffusion---to utilize Theorem \ref{main-result} (ii) for the series expansion of $\lim_{\theta\to 0} \theta^{-1}\langle [U_{\theta}(t)-U_{0}(t)] \xi | \pi_{0}\rangle$ via the series expansion of $V_{0}(t)$. This is done in the next section. 

\begin{remark} Let $\kappa>0$ be fixed throughout this section. For $\theta\geq 0$ we define operators $A_{\theta}:{\cal R}([0,1])\to {\cal R}([0,1])$ by
\begin{equation}\label{generator-of-WF}
A_{\theta}:=(1-x)\theta\frac{\partial }{\partial x} - x \kappa \frac{\partial}{\partial x}+x(1-x)\frac{\partial^2}{\partial x^2}.
\end{equation}
Let $\pi_{\theta}\in {\cal R}([0,1])'$ be implicitly defined by
\begin{equation}\label{pi_theta,mu}
\forall \xi \in {\cal R}([0,1])\ \ \ \ \langle \xi | \pi_{\theta}\rangle := \int \xi(x) \frac{\Gamma(\theta+\kappa )}{\Gamma(\theta)\Gamma(\kappa )} x^{\theta-1} (1-x)^{\kappa -1}\; dx
\end{equation}
for $\theta>0$ and by $\langle \xi | \pi_{0}\rangle := \xi(0)$. Then $A_{\theta}^{*}\pi_{\theta}=0$, i.e., 
\begin{equation}\label{beta-is-stationary}
\forall \xi\in {\cal R}([0,1])\ \ \ \langle A_{\theta} \xi | \pi_{\theta} \rangle =0.
\end{equation}
This is almost trivial in the case that $\theta=0$. For $\theta >0$ and polynomials $\xi$ of the form 
\begin{equation*}
\xi(x)=x^{2}(1-x)^{2}\cdot p(x)
\end{equation*}
---with $p(x)$ an arbitrary polynomial---we obtain (\ref{beta-is-stationary}) by partial integration
\begin{equation*}
\begin{split}
& \langle A_{\theta} \xi | \pi_{\theta} \rangle =\\
& \int \xi(x) \left [ -\frac{\partial}{\partial x}x^{\theta-1}(1-x)^{\kappa}\theta + \frac{\partial}{\partial x}x^{\theta}(1-x)^{\kappa-1}\kappa + \frac{\partial^{2}}{\partial x^{2}}x^{\theta}(1-x)^{\kappa} \right ]\; dx = 0,
\end{split}
\end{equation*}
which can be further extended to arbitrary polynomials by approximation arguments. (See also \cite[Chapter 4]{Ewens}) 
\end{remark}

\begin{definition}
Let $[U_{\theta}(t)]^{*}$ be the adjoint of $U_{\theta}(t)=e^{tA_{\theta}}$ with $A_{\theta}$ given by (\ref{generator-of-WF}). We call the semigroups $t\mapsto [U_{\theta}(t)]^{*}$  Wright--Fisher diffusions. 
\end{definition}
 
\begin{remark}
Since for $\theta>0$ the function $x\mapsto \frac{\Gamma(\theta+\kappa )}{\Gamma(\theta)\Gamma(\kappa )} x^{\theta-1} (1-x)^{\kappa -1}$ defined on $[0,1]$ is the density of a Beta distribution, we obtain that $\langle 1 | \pi_{\theta}\rangle = 1$ and thus further that $\theta^{-1}\langle 1 | \pi_{\theta}-\pi_{0}\rangle = 0$.
\end{remark}

\begin{remark}\label{d_pi_theta-FW}
For $n\geq 1$ we obtain from (\ref{pi_theta,mu}) that $\theta>0$ implies 
\begin{equation}
\langle x^{n} | \pi_{\theta}\rangle = \frac{\Gamma(\theta+n) \Gamma(\theta+\kappa)}{\Gamma(\theta) \Gamma(\theta+\kappa+n)}\cdot \langle 1 | \pi_{\theta+n} \rangle = \prod_{i=0}^{n-1} \frac{\theta+i}{\theta+\kappa+i}, 
\end{equation}
while $\langle x^{n} | \pi_{0}\rangle = 0^{n} = 0$. Thus 
\begin{equation*}
\begin{split}
\lim_{\theta\to 0} \theta^{-1}\langle x^{n} | \pi_{\theta}-\pi_{0}\rangle =  \lim_{\theta\to 0} \theta^ {-1}\langle x^{n} | \pi_{\theta}\rangle
= \lim_{\theta\to 0} \frac{1}{\theta} \prod_{i=0}^{n-1} \frac{\theta+i}{\theta+\kappa+i} = \frac{\Gamma(n)\Gamma(\kappa)}{\Gamma(\kappa+n)}.
\end{split}
\end{equation*}
\end{remark}

\begin{proposition}\label{A'_0-pi_0-FW}
Let $A_{\theta}$ be given by (\ref{generator-of-WF}). Then
\begin{equation}
\frac{d}{d\theta} \langle \xi | A_{\theta}^{*}\pi_{0}\rangle |_{\theta=0} = \lim_{\theta\to 0} \langle [A_{\theta}-A_{0}]\xi | \pi_{0}\rangle = \left \langle (1-x)\frac{\partial }{\partial x} \xi(x) | \pi_{0} \right \rangle = \xi'(0). 
\end{equation} 
\end{proposition}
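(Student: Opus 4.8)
The plan is to exploit that $A_\theta$ depends on $\theta$ only through its first-order term, so that $\theta\mapsto\langle\xi\,|\,A_\theta^*\pi_0\rangle$ is actually a \emph{linear} function of $\theta$ whose derivative can be read off directly.

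First I would isolate the $\theta$-dependent part of (\ref{generator-of-WF}): only the summand $(1-x)\theta\,\partial/\partial x$ carries $\theta$, so $A_\theta-A_0=\theta\,(1-x)\frac{\partial}{\partial x}$ and hence, for every $\xi\in{\cal R}([0,1])$,
\begin{equation*}
[A_\theta-A_0]\xi=\theta\,(1-x)\,\xi'(x)\in{\cal R}([0,1]),
\end{equation*}
so that nothing leaves the dual pairing. Using that $A_\theta^*$ is a dual of $A_\theta$ (its existence being granted as in Lemma \ref{A-fundamental-diff-relation-lemma}), that $A_0^*\pi_0=0$, and that $\langle\,\eta\,|\,\pi_0\rangle=\eta(0)$, the difference quotient defining the $\E$-derivative at $\theta=0$ is, for every $\theta\ne 0$,
\begin{equation*}
\theta^{-1}\langle\,\xi\,|\,[A_\theta^*-A_0^*]\pi_0\,\rangle=\theta^{-1}\langle\,[A_\theta-A_0]\xi\,|\,\pi_0\,\rangle=\langle\,(1-x)\,\xi'(x)\,|\,\pi_0\,\rangle=(1-0)\,\xi'(0)=\xi'(0),
\end{equation*}
which is independent of $\theta$.

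Since the difference quotient is constant in $\theta$, the limit $\theta\to 0$ exists trivially and equals $\xi'(0)$, which yields all three claimed equalities simultaneously. There is essentially no obstacle here: the computation is wholly elementary, and the only point requiring (minor) attention is the bookkeeping of the factor $\theta$ --- the middle expression in the statement is to be read as the difference quotient $\theta^{-1}\langle[A_\theta-A_0]\xi\,|\,\pi_0\rangle$, in accordance with the derivative convention of Remark \ref{partial-notation} --- together with the observation that linearity of $A_\theta$ in $\theta$ makes the limiting argument vacuous (equivalently, one may note directly that $\langle\xi\,|\,A_\theta^*\pi_0\rangle=\langle A_0\xi\,|\,\pi_0\rangle+\theta\,\langle(1-x)\xi'(x)\,|\,\pi_0\rangle$ is affine in $\theta$ with slope $\langle(1-x)\xi'(x)\,|\,\pi_0\rangle=\xi'(0)$).
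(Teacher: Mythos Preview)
Your argument is correct and is exactly the computation the paper has in mind: the proposition is stated without a separate proof, the chain of equalities in its display being the proof, and you have simply unpacked that chain by observing $A_\theta-A_0=\theta(1-x)\partial/\partial x$ and evaluating against $\pi_0=\delta_0$. Your remark that the middle term must be read with the missing factor $\theta^{-1}$ (consistently with Remark~\ref{partial-notation}) is also right; without it the limit would be $0$.
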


\begin{remark}\label{|nu>=partial}
From Proposition \ref{A'_0-pi_0-FW}, Lemma \ref{A-fundamental-diff-relation-lemma} and Remark \ref{remark-diff-by-cont} we obtain that for $A_{\theta}$ given by (\ref{generator-of-WF})
\begin{equation}\label{=partial}
\lim_{\theta\to 0}\langle A_{0} \xi | \theta^{-1}(\pi_{\theta} - \pi_{0}) \rangle = \xi'(0)\ \ \hbox{  i.e. }\  \ \nu:=\lim_{\theta\to 0}A_{0}^{*} \theta^{-1} (\pi_{\theta} - \pi_{0}) = \frac{\partial}{\partial x}|_{x=0}
\end{equation}
and thus further from Remark \ref{main-cor} that
\begin{equation}\label{diff_U_for_WF}
\left \langle \xi \left | \frac{\partial [U_{\theta}(t)]^{*} \pi_{0} }{\partial \theta}|_{\theta=0}\right . \right \rangle = \langle V_{0}(t)\xi | \nu_{} \rangle = \frac{\partial}{\partial x}[V_{0}(t)\xi](x)|_{x=0}\
\end{equation}
\end{remark}

\begin{remark}\label{interpretation-of-WF} Calling an element $\mu\in {\cal R}({\mathbb R})'$ a probability-distribution if $\langle 1 | \mu \rangle =1$ and $\langle \xi | \mu \rangle \geq 0$ for all $\xi\geq 0$, we obtain the following interpretation of our Wright--Fisher diffusions $t\mapsto [U_{\theta}(t)]^{*}$:

Suppose that we start at time $0$ in a probability-distribution $\mu$ on $[0,1]$ giving us the proportion of individuals---in a large haploid population---that carries an allele A. Suppose further that we interpret the parameter $\kappa$ as the mutation rate at which allele A transforms into another allele B and $\theta$ as the mutation rate at which allele B transforms back into A. Then the probability-distribution $[U_{\theta}(t)]^{*}\mu$ gives us the proportion of individuals carrying allele A at time $t$.
Further the $n$-th moment $\langle x^{n} | [U_{\theta}(t)]^{*} \mu \rangle$ of $[U_{\theta}(t)]^{*} \mu$ gives us the probability that $n$ individuals independently chosen from the population at time $t$ all carry allele A.
The probability-distributions $\pi_{\theta}$ are the equilibrium distributions for the respective mutation rates. In the case that $\theta=0$ and $\kappa>0$ none of the individuals carries allele A in the equilibrium $\pi_{0}$. Suppose now that we start in the equilibrium $\pi_{0}$, but that the mutation rate $\theta$ is greater than $0$. Then the probability-distribution describing the population evolves according to $t\mapsto [U_{\theta}(t)]^{*}\pi_{0}$, and $t\mapsto \langle x^{n} | [U_{\theta}(t)]^{*}\pi_{0}\rangle$ gives us the evolution of the probability that $n$ individuals chosen at random from the population all carry allele A. An approximation of the probability $\langle x^{n} | [U_{\theta}(t)]^{*}\pi_{0}\rangle$ for fixed $t$ and small values of $\theta$ can be obtained by the first order expansion 
\begin{equation}\label{approximation}
\langle x^{n} | [U_{\theta}(t)]^{*}\pi_{0}\rangle \approx \langle x^{n} | [U_{0}(t)]^{*}\pi_{0}\rangle + \theta\cdot \left \langle x^{n} \left | \frac{\partial}{\partial \theta}[U_{\theta}(t)]^{*} \pi_{0} \right . \right \rangle.
\end{equation}
Since $\langle x^{n} | [U_{0}(t)]^{*}\pi_{0}\rangle=0$ for $n\geq 1$ and $\langle x^{n} | [U_{0}(t)]^{*}\pi_{0}\rangle=1$ for $n=0$, it suffices by (\ref{diff_U_for_WF}) to calculate
\begin{equation}
\frac{\partial}{\partial x}[V_{0}(t) x^{n}]|_{x=0} = \left \langle x^{n} \left | \frac{\partial}{\partial \theta}[U_{\theta}(t)]^{*} \pi_{0} \right . \right \rangle
\end{equation}
to determine the approximation (\ref{approximation}).
This is done for $n=0,1,2$ in the following example.
\end{remark}

\begin{example}
We calculate the derivative of the $0^{th}$, $1^{st}$ and $2^{nd}$ moments of $\theta\mapsto [U_{\theta}(t)]^{*}\pi_{0}$ at $\theta=0$ by calculating $\frac{\partial}{\partial x}[V_{0}(t)x^{i}]|_{x=0}$ for $i\in { 0, 1, 2}$.\\ \\
Clearly 
\begin{equation*}
A_{0}1=0, A_{0}x =-\kappa x\ \hbox{ and }\ A_{0}x^{2}=(-2\kappa-2)x^{2}+2x
\end{equation*}
From this we obtain for $k\geq 1$ (by induction on $k$) that $A_{0}^{k}1=0$, $A_{0}^{k}x=(-\kappa)^{k}x$ and 
\begin{equation*}
A_{0}^{k}x^{2}=(-2\kappa-2)^{k}x^{2}+2\cdot \sum_{i=0}^{k-1}(-2\kappa-2)^{k-(i+1)}(-\kappa)^{i}x
\end{equation*}
and thus further (note that $A^{0}=id$) that
\begin{equation}\label{0th}
[V_{0}(t)]1 = \sum_{k=1}^{\infty} \left [\frac{t^{k}}{k!}A^{k-1}\right ]1 = t,
\end{equation}
\begin{equation}\label{1st}
[V_{0}(t)]x = \sum_{k=1}^{\infty}\left [\frac{t^{k}}{k!}A^{k-1}\right ]x = \sum_{k=1}^{\infty} \frac{t^{k}}{k!}(-\kappa)^{k-1}x = \frac{e^{-\kappa t}}{-\kappa}x
\end{equation}
and
\begin{equation}\label{2nd}
\begin{split}
[V_{0}(t)]x^{2} & =  \sum_{k=1}^{\infty}\left [\frac{t^{k}}{k!}A^{k-1}\right ]x^{2}\\
& = \sum_{k=1}^{\infty} \frac{t^{k}}{k!} \left [(-2\kappa-2)^{k}x^{2}+2 \sum_{i=0}^{k-1}(-2\kappa-2)^{k-(i+1)}(-\kappa)^{i}x\right ]\\ 
& = e^{(-2\kappa-2) t}x^{2}+2 \sum_{k=1}^{\infty} \frac{t^{k}}{k!} (-2\kappa-2)^{k-1} \sum_{i=0}^{k-1}\left ( \frac{\kappa}{2\kappa+2}\right )^{i}x\\
& = e^{(-2\kappa-2) t}x^{2}+2 \sum_{k=1}^{\infty} \frac{t^{k}}{k!} \frac{(-2\kappa-2)^{k} (1- (\kappa/(2\kappa+2))^{k})}{(-2\kappa-2)(1- (\kappa/(2\kappa+2))} x\\
& = e^{(-2\kappa-2) t}x^{2}-\frac{2}{\kappa+2}  e^{-\kappa t} \left (e^{-(\kappa+2) t} - 1 \right ) x.
\end{split}
\end{equation}
From (\ref{0th}), (\ref{1st}) and (\ref{2nd}), we obtain that
\begin{equation}
\begin{split}
\frac{\partial}{\partial x}[V_{0}(t)]1=0,\ \ \frac{\partial}{\partial x}[V_{0}(t)]x = \frac{e^{-\kappa t}}{-\kappa}\ \hbox{ and }\\
\frac{\partial}{\partial x}[V_{0}(t)]x^{2}=2e^{(-2\kappa-2) t}x -\frac{2}{\kappa+2}  e^{-\kappa t} \left (e^{-(\kappa+2) t} - 1 \right )
\end{split}
\end{equation}
\end{example}

\begin{remark}\label{basis-remark}
Of course we can also calculate the derivatives of higher moments of $\theta\mapsto [U_{\theta}(t)]^{*}\pi_{0}$ at $\theta=0$ with increasing computational effort. Moreover there exists a basis of the space of polynomials---consisting of the vectors $1$, $x$ and the vectors $\xi_{n}$ defined in (\ref{def-xi_n}) below---for that a simple recursion for the calculation of $\lim_{\theta\to 0} \langle \theta^{-1} [U_{\theta}(t)-U_{0}(t)]\xi_{n} | \pi_{0} \rangle$ can be given. 
\end{remark}

\begin{theorem}\label{wf-rec-theorem}
Let $b_{n,0},\kappa \in {\mathbb R}$ be arbitrary, let $\gamma_{n,n}=1$ and let for $n\geq 2$, $k\geq 1$ and $2\leq m\leq n-1$
\begin{equation}\label{def-gamma_}
\gamma_{n,m-1}:=\frac{m(m-1)}{n(-\kappa-n+1)-(m-1)(-\kappa-m+2)}\cdot \gamma_{n,m},
\end{equation}
\begin{equation}\label{def-xi_n}
\xi_{n}:= \sum_{m=2}^{n} x^{m} \gamma_{n,m}
\end{equation}
and
\begin{equation}\label{def-b_}
b_{n,k}:=-\kappa b_{n,k-1} + (n(-\kappa-n+1))^{k-1}\cdot 2\gamma_{n,2}
\end{equation}
Then for $n\geq 2$ and $k\geq 1$
\begin{equation}\label{A^ĸ_psi_n}
A^{k}_{0}[\xi_{n}+b_{n,0}x+a] = (n(-\kappa-n+1))^ {k}\xi_{n} + b_{n,k} x
\end{equation}
\begin{equation}\label{<A^ĸ_psi_n|nu>}
\langle A^{k}_{0}[\xi_{n}+b_{n,0}x+a ] | \nu_{} \rangle = \frac{\partial}{\partial x} [A^{k}\xi_{n}](x)|_{x=0} = b_{n,k}
\end{equation}
and for $b_{n,0}=0$
\begin{equation}\label{recursion-formula-for-derivative}
\left \langle \xi_{n} \left | \frac{\partial [U_{\theta}(t)]^{*} \pi_{0}}{\partial \theta}|_{\theta=0} \right . \right \rangle = \lim_{\theta\to 0} \langle \theta^{-1} [U_{\theta}(t)-U_{0}(t)]\xi_{n} | \pi_{0} \rangle =  \sum_{k=1}^{\infty} \frac{t^{k}}{k!} b_{n,k-1}.
\end{equation}
\end{theorem}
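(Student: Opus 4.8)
The plan is to exploit the lower-triangular action of $A_{0}$ on the monomial basis so that the whole problem collapses onto a scalar two-term recursion. First I would record the elementary identity, valid for every integer $m\geq 0$,
\begin{equation*}
A_{0}x^{m}=\lambda_{m}\,x^{m}+m(m-1)\,x^{m-1},\qquad \lambda_{m}:=m(-\kappa-m+1),
\end{equation*}
read off directly from (\ref{generator-of-WF}) at $\theta=0$; in particular $A_{0}a=0$ for constants $a$ and $A_{0}x=-\kappa x=\lambda_{1}x$. Since $\kappa>0$, the numbers $\lambda_{m}$ are pairwise distinct on $\{1,2,\dots\}$ (as $\lambda_{m}-\lambda_{m'}=(m-m')(1-\kappa-m-m')$), so the denominators in (\ref{def-gamma_}) never vanish and the coefficients $\gamma_{n,m}$, $2\leq m\leq n$, obtained by running (\ref{def-gamma_}) downward from $\gamma_{n,n}=1$, are well defined. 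Collecting in $A_{0}\xi_{n}$ the coefficient of each power $x^{m}$ with $2\leq m\leq n$, the contribution from $A_{0}(\gamma_{n,m}x^{m})$ together with that from $A_{0}(\gamma_{n,m+1}x^{m+1})$ add up to $\lambda_{n}\gamma_{n,m}$ precisely because of (\ref{def-gamma_}); the only surviving lower-order term is the multiple of $x^{1}$ created by $A_{0}$ acting on the lowest monomial $\gamma_{n,2}x^{2}$. Hence $A_{0}\xi_{n}=\lambda_{n}\xi_{n}+2\gamma_{n,2}x$, i.e. $A_{0}$ acts on $\mathrm{span}\{\xi_{n},x\}$ (and on $\mathrm{span}\{\xi_{n},x,1\}$) in an essentially $2\times2$ triangular fashion.

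Next I would prove (\ref{A^ĸ_psi_n}) by induction on $k$. For $k=1$, adding $A_{0}(b_{n,0}x+a)=-\kappa b_{n,0}x$ to $A_{0}\xi_{n}$ gives $A_{0}[\xi_{n}+b_{n,0}x+a]=\lambda_{n}\xi_{n}+(2\gamma_{n,2}-\kappa b_{n,0})x=\lambda_{n}\xi_{n}+b_{n,1}x$ by (\ref{def-b_}). For the inductive step, apply $A_{0}$ to $\lambda_{n}^{k}\xi_{n}+b_{n,k}x$ and use $A_{0}\xi_{n}=\lambda_{n}\xi_{n}+2\gamma_{n,2}x$ together with $A_{0}x=-\kappa x$ to get $\lambda_{n}^{k+1}\xi_{n}+(\lambda_{n}^{k}\cdot2\gamma_{n,2}-\kappa b_{n,k})x=\lambda_{n}^{k+1}\xi_{n}+b_{n,k+1}x$, again by (\ref{def-b_}). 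Equation (\ref{<A^ĸ_psi_n|nu>}) is then immediate: by Remark \ref{|nu>=partial} the functional $\nu$ acts as $\xi\mapsto\xi'(0)$, and $\xi_{n}'(0)=0$ since $\xi_{n}$ starts at $x^{2}$; pairing (\ref{A^ĸ_psi_n}) with $\nu$ thus gives $\langle A^{k}_{0}[\xi_{n}+b_{n,0}x+a]\, |\, \nu\rangle=\lambda_{n}^{k}\xi_{n}'(0)+b_{n,k}=b_{n,k}$, which for the choice $b_{n,0}=0$, $a=0$ also equals $\frac{\partial}{\partial x}[A^{k}\xi_{n}](x)|_{x=0}$.

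Finally, for (\ref{recursion-formula-for-derivative}) (the case $b_{n,0}=0$): the first equality there is simply the definition of the $\E$-derivative of $\theta\mapsto[U_{\theta}(t)]^{*}\pi_{0}$ at $\theta=0$ (Remark \ref{partial-notation}), using $[U_{0}(t)]^{*}\pi_{0}=\pi_{0}$. For the second equality I would invoke (\ref{diff_U_for_WF}) of Remark \ref{|nu>=partial}, whose hypotheses hold here: taking $\E_{j}$ to be the space of polynomials of degree $\leq j$ with any norm, $A_{\theta}$ preserves each $\E_{j}$, the map $\theta\mapsto\Vert A_{\theta}-A_{0}\Vert_{j}=|\theta|\cdot\Vert(1-x)\tfrac{\partial}{\partial x}\Vert_{j}$ is Lipschitz at $\theta=0$, and $\xi\mapsto\langle\xi\,|\,\pi_{0}\rangle=\xi(0)$ is continuous on the finite-dimensional $\E_{j}$. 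Thus the left-hand side of (\ref{recursion-formula-for-derivative}) equals $\tfrac{\partial}{\partial x}[V_{0}(t)\xi_{n}](x)|_{x=0}$. Now $V_{0}(t)\xi_{n}=\sum_{k=1}^{\infty}\tfrac{t^{k}}{k!}A_{0}^{k-1}\xi_{n}$ converges in the finite-dimensional, $A_{0}$-invariant space $\E_{n}\ni\xi_{n}$ by Lemma \ref{convergence-of-exp}, and $\nu$ restricts to a continuous linear functional on $\E_{n}$; hence I may interchange the functional with the sum and obtain $\sum_{k=1}^{\infty}\tfrac{t^{k}}{k!}\tfrac{\partial}{\partial x}[A_{0}^{k-1}\xi_{n}](x)|_{x=0}=\sum_{k=1}^{\infty}\tfrac{t^{k}}{k!}b_{n,k-1}$, using (\ref{<A^ĸ_psi_n|nu>}) for $k\geq2$ together with $\xi_{n}'(0)=0=b_{n,0}$ for the $k=1$ term. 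This proves (\ref{recursion-formula-for-derivative}).

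The induction in the second paragraph and the final interchange of limit and series are routine, the latter being harmless precisely because $\xi_{n}$ lies in a finite-dimensional $A_{0}$-invariant subspace. The one step that genuinely requires care is matching the recursion (\ref{def-gamma_}) to the computation of $A_{0}x^{m}$ so that $A_{0}\xi_{n}-\lambda_{n}\xi_{n}$ collapses exactly to a multiple of $x$ — together with the observation that $\kappa>0$ forces the eigenvalue differences $\lambda_{n}-\lambda_{m}$ to be nonzero, so that $\xi_{n}$, and hence the entire scheme, is well defined in the first place.
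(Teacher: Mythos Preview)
Your proof is correct and follows essentially the same route as the paper: a coefficient comparison showing $A_{0}\xi_{n}=\lambda_{n}\xi_{n}+2\gamma_{n,2}x$, an induction on $k$ to obtain (\ref{A^ĸ_psi_n}), then Remark~\ref{|nu>=partial} for (\ref{<A^ĸ_psi_n|nu>}) and Theorem~\ref{main-result}(ii) together with Lemma~\ref{convergence-of-exp} for (\ref{recursion-formula-for-derivative}). Your explicit check that the denominators in (\ref{def-gamma_}) are nonzero (via $\lambda_{m}-\lambda_{m'}=(m-m')(1-\kappa-m-m')$ and $\kappa>0$) and your verification of the hypotheses of Theorem~\ref{main-result} for the polynomial filtration $E_{j}$ are welcome additions that the paper leaves implicit.
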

\begin{proof}
To prove (\ref{A^ĸ_psi_n}) we show for $\beta=(n(-\kappa-n+1))^{k-1}$ and $i\geq 1$ that
\begin{equation}\label{A^1_psi_n}
A_{0}[ \beta\xi_{n}+b_{n,i-1}x+a] = n(-\kappa-n+1)\beta\xi_{n} + b_{n,i} x.
\end{equation}
This is done by a comparison of coefficients. The coefficient $\alpha_{n}$ of $x^{n}$ in $A_{0}[\beta\xi_{n}+b_{n,k-1}x+a]$ equals $n(-\kappa-n+1)\beta$ as the following calculation shows:
\begin{equation}\label{coefficient-x^{n}}
\alpha_{n}x^{n} = - x^{2} \frac{\partial^2}{\partial x^2} \beta x^{n} - \kappa x \frac{\partial }{\partial x} \beta x^{n} = n(-\kappa-n+1) \beta x^{n}.
\end{equation}
For $3\leq m\leq n$ the coefficient $\alpha_{m-1}$ of $x^{m-1}$ in $A_{0}[\beta\xi_{n}+b_{0} x+a]$ is given by $n(-\kappa-n+1)\beta \gamma_{n,m-1}$ as can be seen by the following calculation:
\begin{equation}\label{coefficient-general-m}
\begin{split}
\alpha_{m-1} x^{m-1} & = x\frac{\partial^2}{\partial x^2} x^{m} \beta \gamma_{n,m}  - x^{2} \frac{\partial^2}{\partial x^2} x^{m-1}  \beta \gamma_{n,m-1} - \kappa x \frac{\partial }{\partial x} x^{m-1} \beta \gamma_{n,m-1}\\ 
& = (m(m-1)\beta\gamma_{n,m} + (m-1)(-\kappa-m+2)\beta \gamma_{n,m-1}) x^{m-1} \\
& =  n(-\kappa-n+1)\beta \gamma_{n,m-1}\, x^{m-1},
\end{split}
\end{equation}
with the last equality a consequence of (\ref{def-gamma_}).
Thus it remains to calculate the coefficients $\alpha_{1}$ of $x^{1}$ and $\alpha_{0}$ of $x^{0}=1$. We obtain $\alpha_{0}=0$ since the first and second order derivatives applied to the constant function gives $0$. We further obtain $\alpha_{1}=-\kappa b_{n,i-1} + \beta 2\gamma_{n,2}= b_{n,i}$ by the following calculation:
\begin{equation}\label{coefficient-x^1}
\begin{split}
\alpha_{1}x & = x \frac{\partial^2}{\partial x^2} x^{2} \beta \gamma_{n,2}  - x^{2} \frac{\partial^2}{\partial x^2} x \beta b_{n,i-1} - \kappa x \frac{\partial }{\partial x} x \beta b_{n,i-1}\\
& =  (\beta\cdot 2\gamma_{n,2} x + 0 -\kappa b_{n,i-1})x=b_{n,i}x,
\end{split}
\end{equation}
with the last equality a consequence of (\ref{def-b_})
From (\ref{coefficient-x^{n}}), (\ref{coefficient-general-m}) and (\ref{coefficient-x^1}) we obtain (\ref{A^1_psi_n}) and by recursion over (\ref{A^1_psi_n}) (with $i$ ranging from $1$ to $k$) we obtain (\ref{A^ĸ_psi_n}). Equation (\ref{<A^ĸ_psi_n|nu>}) is a consequence of (\ref{A^ĸ_psi_n}) and Remark \ref{|nu>=partial}.\\ \\
Finally we obtain (\ref{recursion-formula-for-derivative}) by the following calculation

\begin{equation}
\begin{split}
\lim_{\theta\to 0} \langle \theta^{-1} [U_{\theta}(t)-U_{0}(t)]\xi_{n} | \pi_{0} \rangle & \stackrel{(a)}{=} \langle V_{0}(t)  \xi_{n} | \nu_{} \rangle\\
& \stackrel{(b)}{=} \lim_{N\to\infty} \left \langle \left . \sum_{k=1}^{N} \frac{t}{k} \frac{(t A_{0})^{k-1}}{(k-1)!}  \xi_{n} \right | \nu_{}\right \rangle
\stackrel{(c)}{=} \sum_{k=1}^{\infty} \frac{t^{k}}{k!} b_{n,k-1},
\end{split}
\end{equation}
with (a), (b) and (c) consequences of Theorem \ref{main-result} (ii), Lemma \ref{convergence-of-exp} and equation (\ref{<A^ĸ_psi_n|nu>}) with $b_{n,0}=a=0$, respectively. 
\end{proof}


\begin{thebibliography}{99}

\bibitem{Bob}
Bobrowski, Adam
\newblock {\em Functional Analysis for Probability and Stochastic Processes.}
\newblock Cambridge University Press. (2005).

\bibitem{Bues}
B\"{u}skens, C.; Griesse, R.
\newblock{Parametric sensitivity analysis of perturbed PDE optimal control problems with state and control constraints.}
\newblock{\em J. Optim. Theory Appl.} 131, No. 1, 17--35 (2006).

\bibitem{ChenStroock}
Chen, Linan; Stroock, Daniel W.
\newblock{The fundamental solution to the Wright--Fisher equation.}
\newblock{\em Siam J. Math. Anal.} Vol. 42, No. 2, pp. 539--567 (2010).




\bibitem{EngelNagel}
Engel, K.-J.; Nagel, R.
\newblock {\em One-Parameter Semigroups for Linear Evolution Equations.}
\newblock Graduate Texts in Mathematics. 194. Berlin: Springer. (2000).

\bibitem{Ewens}
Ewens, W. J.
\newblock{\em Mathematical Population Genetics, 2nd ed.}
\newblock Biomathematics, Vol 9. Springer, NewYork (2004)

\bibitem{HeiHoWe}
Heidergott, Bernd; Hordijk, Arie; Weisshaupt, Heinz
\newblock Measure-valued differentiation for stationary Markov chains. 
\newblock {\em Math. Oper. Res.} 31, No. 1, 154--172 (2006).



\bibitem{KeNa}
Kelley, John L.; Namioka, Isaac; et al.
\newblock{\em Linear Topological Spaces. Reprint of the Van Nostrand ed.}
\newblock{ Graduate Texts in Mathematics. 36. New York-Heidelberg-Berlin: Springer-Verlag. (1976).}

\bibitem{Kimura}
Kimura, M.
\newblock{Diffusion models in population genetics.}
\newblock{\em J. Appl. Probab.} 1, 177--232 (1964).


\bibitem{Mall-Thal}
Malliavin, Paul; Thalmaier, Anton
\newblock {\em Stochastic Calculus of Variations in Mathematical Finance.}
\newblock Springer Finance. Berlin: Springer. (2006).



\bibitem{Pflug96}
Pflug, Georg
\newblock {\em Optimization of Stochastic Models}.
\newblock Kluwer Academic Publishers, Boston, 1996.


\bibitem{Steele}
Steele, J.Michael
\newblock{\em Stochastic Calculus and Financial Applications.}
\newblock Applications of Mathematics. 45. New York, NY: Springer. (2001).


\bibitem{WeStationarySensitivity}
Weisshaupt, Heinz
\newblock On the Differentiability of parameterized families of Linear Operators and the Sensitivity of their stationary vectors.
\newblock {\em Period. Math. Hung.} 59, No 2, 147--171 (2009)

\bibitem{WeLind}
Weisshaupt, Heinz
\newblock Hyper-Finite Diffusion Processes via Parabolic Equations: An infinitesimal approach to Lindeberg type limit Theorems associated with diffusions and sensitivities of diffusions.
\newblock{\em J. Log. Anal.} 1, Article 2, 29 p., electronic only (2009).

\bibitem{Wilmott}
Wilmott, Paul
\newblock {\em Paul Wilmott on Quantitative Finance. 3 Vols. With CD-ROM. 2nd ed.}
\newblock Chichester: John Wiley \& Sons. (2006).

\end{thebibliography}
\end{document}